\newtheorem{theorem}{Theorem}[section]
\newtheorem{lemma}[theorem]{Lemma}
\newtheorem{proposition}[theorem]{Proposition}
\newtheorem{corollary}[theorem]{Corollary}
\theoremstyle{plain}
\theoremstyle{definition}
\numberwithin{equation}{section}
\renewcommand{\labelenumi}{\textup{(\theenumi)}}
\newcommand{\Homeo}{\operatorname{Homeo}}
\newcommand{\id}{\operatorname{id}}
\newcommand{\Ker}{\operatorname{Ker}}
\newcommand{\Ad}{\operatorname{Ad}}
\def\det{{{\operatorname{det}}}}
\newcommand{\N}{\mathbb{N}}
\newcommand{\R}{\mathbb{R}}
\newcommand{\Z}{\mathbb{Z}}
\newcommand{\Zp}{{\mathbb{Z}}_+}
\title{Continuous orbit equivalence of topological Markov shifts \\
and KMS states on Cuntz--Krieger algebras}
\author{Kengo Matsumoto \\
Department of Mathematics \\
Joetsu University of Education \\
Joetsu, 943-8512, Japan
}
\begin{document}
\maketitle

\date{}

\def\det{{{\operatorname{det}}}}

\begin{abstract}
We study KMS states for gauge actions with potential functions
on Cuntz--Krieger algebras whose underlying one-sided topological Markov shifts are 
continuously orbit equivalent.
As a result,
we have a certain relationship between topological entropy of   
continuously orbit equivalent one-sided topological Markov shifts.
\end{abstract}




\def\OA{{{\mathcal{O}}_A}}
\def\OaA{{{\mathcal{O}}^a_A}}
\def\OB{{{\mathcal{O}}_B}}
\def\FA{{{\mathcal{F}}_A}}
\def\FB{{{\mathcal{F}}_B}}
\def\DA{{{\mathcal{D}}_A}}
\def\DB{{{\mathcal{D}}_B}}
\def\Ext{{{\operatorname{Ext}}}}
\def\Max{{{\operatorname{Max}}}}
\def\Per{{{\operatorname{Per}}}}
\def\PerB{{{\operatorname{PerB}}}}
\def\Homeo{{{\operatorname{Homeo}}}}
\def\HA{{{\frak H}_A}}
\def\HB{{{\frak H}_B}}
\def\HSA{{H_{\sigma_A}(X_A)}}
\def\Out{{{\operatorname{Out}}}}
\def\Aut{{{\operatorname{Aut}}}}
\def\Ad{{{\operatorname{Ad}}}}
\def\Inn{{{\operatorname{Inn}}}}
\def\det{{{\operatorname{det}}}}
\def\exp{{{\operatorname{exp}}}}
\def\cobdy{{{\operatorname{cobdy}}}}
\def\Ker{{{\operatorname{Ker}}}}
\def\ind{{{\operatorname{ind}}}}
\def\id{{{\operatorname{id}}}}
\def\supp{{{\operatorname{supp}}}}
\def\co{{{\operatorname{co}}}}
\def\Sco{{{\operatorname{Sco}}}}
\def\Act{{{\operatorname{Act}_{\DA}(\mathbb{R},\OA)}}}
\def\U{{{\operatorname{U}}}}

\section{Introduction}

Let $A=[A(i,j)]_{i,j=1}^N$ 
be a square matrix with entries in $\{0,1\}$,
where $1< N \in {\Bbb N}$.
Throughout the paper, 
we assume that 
$A$ is irreducible and not a permutation.
The shift space
$X_A$ of a one-sided topological Markov shift 
$(X_A,\sigma_A)$
consists of one-sided sequences
$(x_n )_{n \in \N} \in \{1,\dots,N \}^{\N}$
satisfying
$
A(x_n,x_{n+1}) =1 
$
for all 
$ n \in {\N}$.
Take and fix $\theta \in \R$ satisfying $0 <\theta <1$.
Define a metric $d_\theta$ on $X_A$ by 
$d_\theta(x,y) = \theta^k$ 
for $x=(x_n)_{n \in \N},y=(y_n)_{n \in \N}$ with $x\ne y$
where $k$ is the largest non-negative integer such that
$x_n = y_n, n<k$.
The space $X_A$ is a zero-dimensional compact Hausdorff space
by the metric $d_\theta$.
The shift transformation $\sigma_A: X_A\longrightarrow X_A$ 
is defined by 
a continuous surjection satisfying
 $\sigma_{A}((x_n)_{n \in \Bbb N})=(x_{n+1} )_{n \in \Bbb N}$.
The two-sided topological Markov shift
 $(\bar{X}_A, \bar{\sigma}_A)$
is a topological dynamical system of a homeomorphism
$\bar{\sigma}_A((x_n)_{n\in \Z})) =(x_{n+1})_{n\in \Z}$
on the zero-dimensional compact Hausdorff space  
$\bar{X}_A$
consisting of two-sided sequences 
$(x_n)_{n \in \Z}$ 
satisfying
$
A(x_n,x_{n+1}) =1 
$
for all 
$ n \in {\Z}$.

The Cuntz--Krieger algebra $\OA$ for the matrix $A$ is defined to be 
a universal unique $C^*$-algebra generated by
partial isometries
$S_1,\dots,S_N$
satisfying the relations:
$ 
\sum_{j=1}^N S_j S_j^* = 1, \,
S_i^* S_i = \sum_{j=1}^N A(i,j) S_jS_j^*, \, i=1,\dots,N (\cite{CK}). 
$
The gauge automorphisms
$\gamma^A_t, t \in {\mathbb{R}}$
on $\OA$
are defined by 
$\gamma^A_t(S_j)= e^{\sqrt{-1}t}S_j, j=1,\dots,N.$
They 
yield an  action of $\R$
on $\OA$ which we call the gauge action.
A word $\mu =(\mu_1, \dots, \mu_k)$ 
for $\mu_i \in \{1,\dots,N\}$
is said to be admissible for
the topological Markov shift
$(X_A,\sigma_A)$
if $\mu$ appears somewhere in an element $x$ in $X_A$.
The length of $\mu$ is $k$, 
which is denoted by $|\mu|$.
 We denote by 
$B_k(X_A)$ the set of all admissible words of length $k$.
We set 
$B_*(X_A) = \cup_{k=0}^\infty B_k(X_A)$ 
where $B_0(X_A)$ denotes  the empty word $\emptyset$.
Denote by $U_\mu$ the cylinder set 
$\{ (x_n )_{n \in \Bbb N} \in X_A 
\mid x_1 =\mu_1,\dots, x_k = \mu_k \}$
for $\mu=(\mu_1,\dots,\mu_k) \in B_k(X_A)$.  
The set of cylinder sets form a basis of the topology 
of $X_A$.
Let us denote by
 $\DA$
the $C^*$-subalgebra of $\OA$
generated by 
the projections of the form
$S_{\mu_1}\cdots S_{\mu_n}S_{\mu_n}^* \cdots S_{\mu_1}^*
$
for
${\mu_1\cdots \mu_n} \in B_*(X_A)$.
It is well-known that 
the commutative 
$C^*$-algebra $C(X_A)$
of complex valued continuous functions on $X_A$
is regarded as the  
$C^*$-subalgebra $\DA$
by identifying
the characteristic functions
$\chi_{U_{\mu_1\cdots \mu_n}} \in C(X_A)$
of the cylinder sets
$U_{\mu_1\cdots \mu_n}$
with
the projections 
$S_{\mu_1}\cdots S_{\mu_n}S_{\mu_n}^* \cdots S_{\mu_1}^*
\in \DA.$


Let $\alpha$ be an action of $\R$ on $\OA$.
Denote by $\OaA$ the set of analytic elements of the action. 
For a positive real number $\beta \in \R$,
a state $\psi$ on $\OA$ is called a $\log{\beta}$-{\it KMS state} 
for the action $\alpha$ 
if $\psi$ satisfies the  condition
\begin{equation}
\psi(y\alpha_{i\log{\beta}}(x)) = \psi(xy),\qquad
x \in \OaA, \, \, y \in \OA \label{eq:KMS}.
\end{equation} 
The studies of KMS states on operator algebras are very crucial from the viewpoints of quantum statistical mechanics and the structure theory of $C^*$-algebras and von Neumann algebras.
There have been many important and interesting studies
as in the text book by Bratteli--Robinson (\cite{BR}).
For Cuntz--Krieger algebras,
Enomoto--Fujii--Watatani proved that 
the gauge action $\gamma^A$ on $\OA$ 
has a $\log \beta$-KMS state if and only if $\beta = r_A$
the Perron--Frobenius eigenvalue of the matrix $A$,
and the admitted KMS state is faithful and unique 
(\cite{EFW}, see \cite{OP} for Cuntz algebras).
The reciprocal of the eigenvalue $r_A$ is the radius of convergence of the zeta function
$
\zeta_A(z)
= \exp\left(\sum_{n=1}^\infty \frac{\Per_n(\bar{X}_A)}{n} z^n\right)
$
of the topological Markov shift $(\bar{X}_A, \bar{\sigma}_A)$,
and the value $\log r_A$ is topological entropy of the topological 
Markov shift $(\bar{X}_A, \bar{\sigma}_A)$ (\cite{Parry}, cf, \cite{LM}).

In \cite{MaJOT2015},
the author studied generalized gauge actions
from the viewpoints of continuous orbit equivalence and flow equivalence of 
topological Markov shifts 
(see also \cite{BC}, \cite{CEOR}, \cite{MaMZ2017}, \cite{MaPAMS2017}, \cite{MMETDS}, etc.).
We regard a function $f$ in $C(X_A,\R)$ 
as an element of the subalgebra $C(X_A, \mathbb{C})(=\DA)$ of $\OA$.
For $f \in C(X_A, \R)$,
define one-parameter unitaries
$V_t(f)\in \DA, t \in \R$ 
by setting $V_t(f) = \exp({\sqrt{-1} t f})$
and an automorphism
$\gamma_t^f$ on $\OA$ for each $t \in \R$ 
by 
\begin{equation}
\gamma_t^f(S_j) = V_t(f) S_j, \qquad j=1,\dots,N. \label{eq:gammatf}
\end{equation}
The automorphisms
$\gamma_t^f, t \in \R$ yield an action of 
$\R$ on $\OA$
 such that
$\gamma^f_t(a) =a$
for all $a \in \DA$.
Such an action on $\OA$ is called a generalized gauge action in \cite{ExelBBMS2004}.
If the function $f$ is constantly $1$,
the action $\gamma^1$ is the gauge action 
$\gamma^A$.
Let $F_\theta(X_A)$
be the set of real valued 
$\theta$-H\"{o}lder continuous functions 
on $X_A$.
In \cite{ExelJFA2003}, \cite{ExelBBMS2004} and  \cite{ExelLopes},
R. Exel and Exel--Lopes have studied KMS states for generalized gauge actions
on the $C^*$-algebras constructed from crossed products by endomorphisms
including Cuntz-Krieger algebras 
from the viewpoint of thermodynamic formalism 
of dynamical systems 
and shown that 
there exists a bijective correspondence between KMS states 
and eigenvectors of  Ruelle operators (cf. \cite{PWY}).
As a result, Exel \cite{ExelBBMS2004} showed that  
there exists a unique KMS state for a generalized gauge action on Cuntz--Krieger algebras.
For $\phi \in F_\theta(X_A)$,
the Ruelle operator $\lambda_\phi:C(X_A)\rightarrow C(X_A)$
on a topological Markov shift
$(X_A,\sigma_A)$
is defined by 
\begin{equation}
\lambda_\phi(a) = \sum_{i=1}^N  S_i^* a e^\phi S_i, \qquad
a \in C(X_A) \label{eq:ruelleop} 
\end{equation}
where $e^\phi\in \DA$
is defined by $(e^\phi)(x) = e^{\phi(x)}, x \in X_A$
(cf. \cite{PP}, \cite{Ruelle1978}).
The Ruelle operators have been playing a key role
in Ruelle's thermodynamic formalism in topological Markov shifts
(\cite{Ruelle1978}, \cite{Ruelle2002}, cf. \cite{Baladi1998}, \cite{Bo}, \cite{PP}, etc.).

In the first part of the paper, 
we will give a direct proof 
for Exel's result above in the case of Cuntz--Krieger algebras
which says that there exists a bijective correspondence between KMS states 
and eigenvectors of the Ruelle operators
(Proposition \ref{prop:KMSRuelle}).

In the second part of the paper, which is a main part of the paper,
we will find a  certain relationship between topological entropy of   
continuously orbit equivalent topological Markov shifts, 
by using Exel's result.
The author in \cite{MaPacific}
has introduced a notion of continuous orbit equivalence 
of one-sided topological Markov shifts.
 It is weaker than one-sided topological conjugacy
 and  gives rise to isomorphic Cuntz--Krieger algebras 
(\cite{MaMZ2017}, see also \cite{MMKyoto}, \cite{BC}, etc.).
Let $A$ and $B$ be irreducible square matrices with entries in $\{0,1 \}$.
If there exists a homeomorphism
$h: X_A \rightarrow X_B$ 
such that
\begin{align}
\sigma_B^{k_1(x)} (h(\sigma_A(x))) 
& = \sigma_B^{l_1(x)}(h(x))
\quad 
\text{ for} \quad 
x \in X_A,  \label{eq:orbiteq1x} \\
\sigma_A^{k_2(y)} (h^{-1}(\sigma_B(y))) 
& = \sigma_A^{l_2(y)}(h^{-1}(y))
\quad 
\text{ for } \quad 
y \in X_B \label{eq:orbiteq2y}
\end{align}
for some continuous functions 
$k_1,l_1 \in C(X_A, \Zp),\, 
 k_2,l_2 \in C(X_B, \Zp), 
$
the one-sided topological Markov shifts
$(X_A, \sigma_A)$ and $(X_B,\sigma_B)$ 
are said to be {\it continuously orbit equivalent},
where $\Zp =\{0,1,2,\dots \}$.
The functions 
$c_1(x) = l_1(x) -k_1(x), x \in X_A$
and
$c_2(y) = l_2(y) -k_2(y), y \in X_B$
are called the cocycle function for $h$
and
the cocycle function for $h^{-1}$,
respectively.
In \cite{MMETDS},
Matui and the author have shown that
the zeta functions of continuous orbit equivalent topological Markov shifts
$(X_A,\sigma_A)$ and $(X_B,\sigma_B)$ 
have a certain relationship  by using the above cocycle functions
(see also \cite[Theorem 4.6]{MaPAMS2016}).
It suggests that the topological entropy of the topological Markov shifts
$(X_A,\sigma_A)$ and $(X_B,\sigma_B)$
have some relation,
because the topological entropy  
are  maximum poles of their zeta functions.
In this  paper, we will show the following theorem that tells us 
a relationship between topological entropy of continuously orbit 
equivalent topological Markov shifts.
\begin{theorem}[{Theorem \ref{thm:entropy}}]
Let $A$ and $B$ be irreducible, non-permutation matrices with entries in $\{0,1\}$.
Suppose that
one-sided topological Markov shifts
$(X_A, \sigma_A)$ and $(X_B,\sigma_B)$
are continuously orbit equivalent.
Let $\varphi_A$ and  $\varphi_B$
be the unique KMS states for the gauge actions on $\OA$ and on  $\OB$,
respectively.
Let
$r_A$ and  $r_B$ be the Perron--Frobenius eigenvalues of the matrix $A$ 
and of the matrix $B$, respectively.
Denote by  $h_{top}(\sigma_A)$ and  $h_{top}(\sigma_B)$
the topological entropy of  $(X_A,\sigma_A)$
and $(X_B,\sigma_B)$, respectively.
Then we have
\begin{align}
h_{top}(\sigma_A) & = -\lim_{n\to\infty} \frac{1}{n}\log \varphi_A(r_B^{-c_1^n}),
\qquad  \label{eq:th1.1A}\\
h_{top}(\sigma_B) & = -\lim_{n\to\infty} \frac{1}{n}\log \varphi_B(r_A^{-c_2^n}),\label{eq:th1.1B}
\end{align}
where 
$r_B^{-c_1^n} \in C(X_A)$ is defined by
$r_B^{-c_1^n}(x) = r_B^{-{\sum_{k=0}^{n-1}c_1(\sigma_A^k(x))}}, \, x \in X_A$,
and 
$r_A^{-c_2^n} \in C(X_B)$ is similarly defined.
\end{theorem}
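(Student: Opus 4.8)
The plan is to derive both identities from one transfer-operator computation; by the symmetry $A\leftrightarrow B$ I treat only \eqref{eq:th1.1A}. Since $h_{top}(\sigma_A)=\log r_A$ (Parry, as recalled in the introduction), it suffices to show $-\lim_n\frac{1}{n}\log\varphi_A(r_B^{-c_1^n})=\log r_A$. First I would record that the restriction of the gauge KMS state $\varphi_A$ to $\DA\cong C(X_A)$ is the eigenmeasure (conformal measure) $\mu_A$ characterised by $\int\lambda_0^n(a)\,d\mu_A=r_A^n\int a\,d\mu_A$ for $a\in C(X_A)$, where $\lambda_0$ is the Ruelle operator with potential $0$. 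This is immediate from the defining KMS relation, which for the gauge action reads $\varphi_A(S_i a S_i^*)=r_A^{-1}\varphi_A(a S_i^*S_i)$ and hence $\mu_A(U_{i\mu})=r_A^{-1}\mu_A(U_\mu)$ for admissible $i\mu$; alternatively it is part of the eigenvector/eigenmeasure correspondence of Proposition \ref{prop:KMSRuelle}.

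Next I would set $g=-(\log r_B)\,c_1$, which lies in $F_\theta(X_A)$ because $c_1\in C(X_A,\Zp)$ is locally constant on the zero-dimensional space $X_A$. Then $r_B^{-c_1^n}=e^{S_n g}$ with $S_n g=\sum_{k=0}^{n-1}g\circ\sigma_A^k$, while a direct expansion gives $\lambda_g^n(1)(y)=\sum_{|\mu|=n,\ \mu y\ \text{admissible}}e^{S_n g(\mu y)}$. Applying the conformality of $\mu_A$ to the function $a=e^{S_n g}$ and using $\lambda_0^n(a)(y)=\sum_{|\mu|=n}a(\mu y)$ then yields the key identity
\[
\varphi_A(r_B^{-c_1^n})=\int_{X_A}e^{S_n g}\,d\mu_A=r_A^{-n}\int_{X_A}\lambda_g^n(1)\,d\mu_A .
\]
This converts the KMS quantity into iterates of the Ruelle operator $\lambda_g$.

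Now I would apply the Ruelle--Perron--Frobenius theorem (\cite{PP}, \cite{Ruelle1978}) to the H\"older potential $g$ on the irreducible one-sided shift: $\lambda_g$ has a simple leading eigenvalue $e^{P(g)}$ with strictly positive eigenfunction $h_g$ and a spectral gap, so $e^{-nP(g)}\lambda_g^n(1)\to h_g$ uniformly, where $P(g)$ denotes the topological pressure. Since $\mu_A$ has full support and $h_g>0$, we get $\int h_g\,d\mu_A>0$, and integrating shows $\frac{1}{n}\log\int\lambda_g^n(1)\,d\mu_A\to P(g)$; combined with the identity above, $\frac{1}{n}\log\varphi_A(r_B^{-c_1^n})\to -\log r_A+P(g)$, so in particular the limit exists. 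It remains to prove that $P(g)=P(-(\log r_B)c_1)=0$. For this I would invoke the continuous orbit equivalence: the homeomorphism $h$ induces a $*$-isomorphism $\Phi\colon\OA\to\OB$ with $\Phi(\DA)=\DB$ that intertwines the gauge action of $\OB$ with the generalized gauge action $\gamma^{c_1}$ on $\OA$, i.e. $\Phi^{-1}\circ\gamma^B_t\circ\Phi=\gamma^{c_1}_t$ (\cite{MaJOT2015}, \cite{MaMZ2017}). Hence $\varphi_B\circ\Phi$ is a KMS state for $\gamma^{c_1}$ at the same inverse temperature $\log r_B$ at which $\varphi_B$ is KMS for the gauge action of $\OB$, and by Proposition \ref{prop:KMSRuelle} the existence of such a state forces $\lambda_{-(\log r_B)c_1}$ to have leading eigenvalue $1$, i.e. $P(-(\log r_B)c_1)=0$. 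Combining, $-\lim_n\frac{1}{n}\log\varphi_A(r_B^{-c_1^n})=\log r_A=h_{top}(\sigma_A)$, and \eqref{eq:th1.1B} follows by exchanging $A$ and $B$.

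The main obstacle is the last step: identifying the pull-back $\Phi^{-1}\circ\gamma^B\circ\Phi$ precisely as the generalized gauge action with potential $c_1$ and at inverse temperature $\log r_B$, so that Exel's correspondence yields $P(-(\log r_B)c_1)=0$. Once this is secured the remainder is a routine Ruelle--Perron--Frobenius estimate built on the conformal identity. As a consistency check, the trivial orbit equivalence ($B=A$, $h=\id$, $c_1\equiv 1$) gives $g\equiv-\log r_A$, $P(g)=0$, and $\varphi_A(r_A^{-n})=r_A^{-n}$, reproducing $-\lim_n\frac{1}{n}\log r_A^{-n}=\log r_A$.
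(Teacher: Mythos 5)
Your proposal is correct and follows essentially the same route as the paper: you rewrite $\varphi_A(r_B^{-c_1^n})$ as $r_A^{-n}$ times the pairing of the conformal measure with $\lambda_g^n(1)$, apply the Ruelle--Perron--Frobenius convergence, and pin down the leading eigenvalue (your $P(-(\log r_B)c_1)=0$) via the isomorphism $\Phi$ intertwining the gauge action on $\OB$ with $\gamma^{A,c_1}$ together with Proposition \ref{prop:KMSRuelle} --- exactly the content of Lemma \ref{lem:COE3.2} and Proposition \ref{prop:limitthm}, up to the harmless renormalization $\phi_{c_1}=(1-c_1)\log r_B = \log r_B + g$.
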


In \cite{MaJOT2015}, a notion of strongly continuous orbit equivalence between 
one-sided topological Markov shifts
$(X_A, \sigma_A)$ and $(X_B,\sigma_B)$
was introduced.
It is defined as the cases where the cocycle function $c_1$ is cohomologous to 
$1$ in $C(X_A,\Z)$. 
Although we have already known that strongly continuous orbit equivalence
implies topological conjugacy of their two-sided topological Markov shifts
(\cite[Theorem 5.5]{MaJOT2015}) and hence $h_{top}(\sigma_A)=h_{top}(\sigma_B)$, 
as a direct corollary of the above theorem, we have 
\begin{corollary}[{Corollary \ref{cor:SCOE}}]
If one-sided topological Markov shifts
$(X_A, \sigma_A)$ and $(X_B,\sigma_B)$ are strongly continuously orbit equivalent, then we have
$h_{top}(\sigma_A)=h_{top}(\sigma_B)$.
\end{corollary}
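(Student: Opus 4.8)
The plan is to substitute the defining property of strong continuous orbit equivalence into formula \eqref{eq:th1.1A} of Theorem \ref{thm:entropy} and watch the right-hand side collapse to $\log r_B$. By definition, strong continuous orbit equivalence means that the cocycle function $c_1$ is cohomologous to the constant function $1$ in $C(X_A,\Z)$; that is, there exists $b \in C(X_A,\Z)$ such that $c_1 - 1$ is the coboundary $b\circ\sigma_A - b$, i.e.
\begin{equation*}
c_1(x) = 1 + b(\sigma_A(x)) - b(x), \qquad x \in X_A.
\end{equation*}
First I would compute the Birkhoff sum: summing this identity along an orbit telescopes, so that
\begin{equation*}
c_1^n(x) = \sum_{k=0}^{n-1} c_1(\sigma_A^k(x)) = n + b(\sigma_A^n(x)) - b(x), \qquad x \in X_A,
\end{equation*}
and hence, pointwise on $X_A$,
\begin{equation*}
r_B^{-c_1^n}(x) = r_B^{-n}\, r_B^{\, b(x) - b(\sigma_A^n(x))}.
\end{equation*}

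The second step is to control the correction factor uniformly in $n$. Since $b$ is a continuous integer-valued function on the compact space $X_A$, it takes only finitely many values, so $M := \|b\|_\infty < \infty$. Because $B$ is irreducible and not a permutation we have $r_B > 1$, whence
\begin{equation*}
r_B^{-2M} \le r_B^{\, b(x) - b(\sigma_A^n(x))} \le r_B^{2M}, \qquad x \in X_A, \ n \in \N,
\end{equation*}
so that $r_B^{-n} r_B^{-2M} \le r_B^{-c_1^n}(x) \le r_B^{-n} r_B^{2M}$ for every $x$. The element $r_B^{-c_1^n}$ lies in $\DA \cong C(X_A)$, on which the KMS state $\varphi_A$ restricts to a positive linear functional and is therefore monotone; applying $\varphi_A$ to the two-sided bound gives
\begin{equation*}
r_B^{-n} r_B^{-2M} \le \varphi_A(r_B^{-c_1^n}) \le r_B^{-n} r_B^{2M}, \qquad n \in \N.
\end{equation*}

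Finally I would take logarithms, divide by $-n$, and let $n \to \infty$; the two outer bounds both tend to $\log r_B$ while the $\pm 2M\log r_B / n$ contributions vanish, yielding
\begin{equation*}
-\lim_{n\to\infty} \frac{1}{n}\log \varphi_A(r_B^{-c_1^n}) = \log r_B.
\end{equation*}
By \eqref{eq:th1.1A} the left-hand side equals $h_{top}(\sigma_A)$, and since $\log r_B$ is the topological entropy $h_{top}(\sigma_B)$, I conclude $h_{top}(\sigma_A) = h_{top}(\sigma_B)$. Once Theorem \ref{thm:entropy} is available there is no serious obstacle: the entire argument rests on the telescoping of the cohomology relation together with the boundedness of the integer-valued coboundary $b$, which renders the correction factor $r_B^{\, b(x)-b(\sigma_A^n(x))}$ invisible after one takes $\frac{1}{n}\log$. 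The only point deserving a line of care is the monotonicity of $\varphi_A$ on $\DA$, which holds because its restriction there is given by integration against a probability measure on $X_A$.
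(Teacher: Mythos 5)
Your proof is correct and follows essentially the same route as the paper: telescope the cohomology relation to get $c_1^n(x) = n + (\text{bounded coboundary term})$, then note that the bounded correction disappears after taking $\frac{1}{n}\log$ in formula \eqref{eq:th1.1A}. The paper's own proof is just a terser version of this (it states the telescoped identity and the boundedness and cites Theorem \ref{thm:entropy}); your only cosmetic deviation is the sign convention on the coboundary, which is immaterial.
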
 
In the final section, we will 
concretely calculate the formulas \eqref{eq:th1.1A}, \eqref{eq:th1.1B}
for two matrices
\begin{equation*}
A=
\begin{bmatrix}
1 & 1 \\
1 & 1
\end{bmatrix},
\qquad
B=
\begin{bmatrix}
1 & 1 \\
1 & 0 
\end{bmatrix}
\end{equation*}
for which $(X_A, \sigma_A)$ and $(X_B,\sigma_B)$
are continuously orbit equivalent.

\section{Ruelle operators and KMS states}

In this section, we fix a topological Markov shift
$(X_A,\sigma_A)$.  
For a function $f \in C(X_A,\Z)$ and $n \in \N$,
we define a function $f^n$ on $X_A$ by
\begin{equation*}
f^n(x) = \sum_{i=0}^{n-1}f(\sigma_A^i(x)), \qquad x \in X_A.
\end{equation*}
We fix generating partial isometries $S_1,\dots,S_N$ of the Cuntz--Krieger
algebra $\OA$ satisfying the relations:
$ 
\sum_{j=1}^N S_j S_j^* = 1, \,
S_i^* S_i = \sum_{j=1}^N A(i,j) S_jS_j^*, \, i=1,\dots,N (\cite{CK}). 
$
For a word $\mu = (\mu_1,\dots,\mu_n)\in B_n(X_A)$,
denote by 
$S_\mu $ the partial isometry
$S_{\mu_1}\cdots S_{\mu_n}$.
The $C^*$-subalgebra $\DA$ generated by 
the projections
$S_\mu S_\mu^*, \mu \in B_*(X_A)$ 
is identified with 
the commutative 
$C^*$-algebra $C(X_A)$
of the complex valued continuous functions on $X_A$
through the correspondence
$S_\mu S_\mu^* \leftrightarrow \chi_{U_\mu}$,
where
$\chi_{U_{\mu}} \in C(X_A)$
is the characteristic function of of the cylinder set
$U_{\mu}$.
Recall that $0<\theta<1$
is a real number defining the metric $d_\theta(x,y), \, x, y \in X_A$  
on $X_A$.
A continuous function
$\phi:X_A \rightarrow \R$ 
is said to be $\theta$-H\"{o}lder continuous
if there exists a constant $C$ 
such that
$|\phi(x) - \phi(y) | \le C d_\theta(x,y)$.
We denote by $F_\theta(X_A)$ 
the set of 
$\theta$-H\"{o}lder continuous
functions on $X_A$.
It is easy to see that an integer valued continuous function 
on $X_A$ is $\theta$-H\"{o}lder continuous, 
and hence so is a scalar multiple of an integer valued continuous function 
on $X_A$.
Under our identification
between $\DA$ and $C(X_A)$ above,
the Ruelle operator 
$\lambda_\phi:\DA\rightarrow \DA$
for $\phi \in F_\theta(X_A)$
is defined by the formula \eqref{eq:ruelleop}. 
It may be  written
\begin{equation*}
\lambda_\phi(f) (x) 
= \sum_{\sigma_A(y) =x}e^{\phi(y)}f(y), \qquad x \in X_A.
\end{equation*}
It is also called the transfer operator
(cf. \cite{Baladi1998}).
One easily sees that 
$\lambda_\phi(f) \in F_\theta(X_A)$ for $f \in F_\theta(X_A)$.
For $\phi\equiv 0$, we write
$\lambda_{\phi}$ as $\lambda_A$.
We note that
the identities
\begin{equation}
(\lambda_\phi)^n(a) = \lambda_A^n( a e^{\phi^n}),
\qquad
a \in \DA \label{eq:nruelleop} 
\end{equation}
hold,
where
$\phi^n(x) = \sum_{i=0}^{n-1}\phi(\sigma_A^i(x)), x \in X_A$.
The following  well-known result is known 
as Ruelle--Perron--Frobenius Theorem.  
\begin{lemma}[{\cite[Theorem 2.2]{PP}}] \label{lem:RPF}
For $\phi \in F_\theta(X_A)$, we have the following assertions.
\begin{enumerate}
\renewcommand{\theenumi}{\roman{enumi}}
\renewcommand{\labelenumi}{\textup{(\theenumi)}}
\item
There exists 
a unique positive eigenvalue $r_\phi$ of $\lambda_\phi$, 
a strictly positive function $g_\phi \in \DA$
and
a faithful state $\varphi_\phi$ on $\DA$
satisfying  
\begin{equation}
\lambda_\phi(g_\phi) = r_\phi g_\phi, \qquad
\varphi_\phi \circ\lambda_\phi = r_\phi \varphi_\phi, \qquad
\varphi_\phi(g_\phi) = 1. \label{eq:RPF}
\end{equation}
\item
$\frac{(\lambda_\phi)^n(a)}{r_\phi^n}$
uniformly converges to
$\varphi_\phi(a) g_\phi$
for each $a \in \DA$.
\end{enumerate}
\end{lemma}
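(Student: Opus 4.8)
The plan is to run the classical dual construction behind the Ruelle--Perron--Frobenius theorem: first produce the eigenfunctional $\varphi_\phi$ and eigenvalue $r_\phi$ by a fixed-point argument, then produce the eigenfunction $g_\phi$ by a Birkhoff cone contraction, and finally read off the asymptotics in (ii) from the same contraction. Since $\lambda_\phi$ is a bounded positive operator on $\DA=C(X_A)$, its transpose $\lambda_\phi^*$ acts weak-$*$-continuously on the compact convex set $M(X_A)$ of Borel probability measures, and $\nu\mapsto\lambda_\phi^*\nu/(\lambda_\phi^*\nu)(1)$ is a well-defined continuous self-map of $M(X_A)$, the denominator $(\lambda_\phi^*\nu)(1)=\nu(\lambda_\phi 1)$ being strictly positive because $(\lambda_\phi 1)(x)=\sum_{\sigma_A(y)=x}e^{\phi(y)}>0$. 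Schauder--Tychonoff then yields $\nu\in M(X_A)$ and $r_\phi>0$ with $\lambda_\phi^*\nu=r_\phi\nu$; setting $\varphi_\phi:=\nu$ gives the middle identity $\varphi_\phi\circ\lambda_\phi=r_\phi\varphi_\phi$ of \eqref{eq:RPF}. Irreducibility of $A$ forces $\nu$ to charge every cylinder $U_\mu$---otherwise the relation $\lambda_\phi^*\nu=r_\phi\nu$ would, through transitivity of $\sigma_A$, propagate the vanishing to all of $X_A$---so $\varphi_\phi$ is faithful on $C(X_A)$.

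Next I would construct $g_\phi$. Writing $\widetilde{L}:=r_\phi^{-1}\lambda_\phi$ and using \eqref{eq:nruelleop}, the $\theta$-H\"older hypothesis on $\phi$ gives the bounded-distortion estimate that $|\phi^n(y)-\phi^n(y')|$ is bounded, uniformly in $n$, by a constant multiple of $d_\theta(\sigma_A^n(y),\sigma_A^n(y'))$ whenever $y,y'$ share the same initial word of length $n$, obtained by summing the H\"older bounds along a geometric series in $\theta$. Fixing $b$ large enough, this lets me work inside the cone
\[
\Lambda_b=\{\,f\in F_\theta(X_A): f>0,\ f(x)\le e^{b\,d_\theta(x,x')}f(x')\ \text{for all }x,x'\,\}
\]
and verify that $\widetilde{L}$ maps $\Lambda_b$ into itself; primitivity of $A$ (a power $A^N$ strictly positive) shows that $\widetilde{L}^N$ sends all of $\Lambda_b$ into a subset of finite diameter for Birkhoff's projective metric, so $\widetilde{L}^N$ is a strict contraction there. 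Its unique fixed ray provides a strictly positive $g_\phi\in F_\theta(X_A)$ with $\widetilde{L}g_\phi=g_\phi$, i.e. $\lambda_\phi(g_\phi)=r_\phi g_\phi$, normalized so that $\varphi_\phi(g_\phi)=1$; uniqueness of the fixed ray together with positivity of $\varphi_\phi$ gives uniqueness of $r_\phi$ as a positive eigenvalue. This settles (i).

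For (ii) I would exploit the same contraction. For $f\in\Lambda_b$ the iterates $\widetilde{L}^n f$ converge in direction to $g_\phi$ geometrically in the Hilbert metric; since $\varphi_\phi\circ\widetilde{L}=\varphi_\phi$ and $\varphi_\phi(g_\phi)=1$, the coefficient is pinned down by $\varphi_\phi(\widetilde{L}^n f)=\varphi_\phi(f)$, giving $\widetilde{L}^n f\to\varphi_\phi(f)\,g_\phi$ uniformly. Every real $f\in F_\theta(X_A)$ lands in the cone after adding a large multiple of $g_\phi$, so by linearity the convergence extends to all of $F_\theta(X_A)$, and then to all $a\in\DA=C(X_A)$ by density and the uniform boundedness of $r_\phi^{-n}\lambda_\phi^n$. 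Unwinding $\widetilde{L}=r_\phi^{-1}\lambda_\phi$ yields $r_\phi^{-n}\lambda_\phi^n(a)\to\varphi_\phi(a)\,g_\phi$ uniformly, which is (ii).

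The hard part is the strict-contraction step: showing that $\widetilde{L}^N$ carries the whole cone $\Lambda_b$ into a set of finite projective diameter, equivalently that the oscillation of $\log\widetilde{L}^N f$ is controlled independently of $f\in\Lambda_b$. This rests on the bounded-distortion estimate above, hence squarely on the $\theta$-H\"older regularity of $\phi$, and on primitivity $A^N>0$. It is precisely here that aperiodicity (topological mixing) enters: for a merely transitive $A$ with period $p>1$ the one-step iterates carry a nontrivial peripheral part and one must replace $\widetilde{L}$ by $\widetilde{L}^{p}$ before contracting, exactly as carried out in \cite{PP}; since the statement is quoted as \cite[Theorem 2.2]{PP}, I would cite that source for these estimates rather than reproduce them.
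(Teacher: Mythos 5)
The paper offers no proof of this lemma at all: it is quoted as the classical Ruelle--Perron--Frobenius theorem, with a citation to \cite[Theorem 2.2]{PP}, and only its conclusions (chiefly part (ii)) are used later, in the proof of Proposition \ref{prop:limitthm}. Your proposal is therefore a genuinely different route in the obvious sense that it actually proves the statement rather than citing it, and the route you choose is the standard one: Schauder--Tychonoff applied to $\nu \mapsto \lambda_\phi^*\nu/(\lambda_\phi^*\nu)(1)$ on the probability measures produces the eigenvalue $r_\phi$ and the eigenfunctional with $\varphi_\phi\circ\lambda_\phi = r_\phi\varphi_\phi$; irreducibility gives faithfulness (your propagation argument is the right one: if $\varphi_\phi(\chi_{U_\mu})=0$, then $\lambda_\phi^n(\chi_{U_\mu})$ vanishes $\varphi_\phi$-a.e.\ for all $n$, and irreducibility makes these functions strictly positive on each cylinder $U_a$ for suitable $n$, forcing $\varphi_\phi(1)=0$); a Birkhoff cone contraction built on the H\"older distortion bounds produces the strictly positive eigenfunction $g_\phi$; uniqueness of $r_\phi$ follows by evaluating $\varphi_\phi$ on a putative second positive eigenfunction; and (ii) follows from the contraction on H\"older functions, extended to all of $\DA = C(X_A)$ by density together with the uniform bound on $\|r_\phi^{-n}\lambda_\phi^n\|$ coming from convergence at $a=1$. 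All of this is sound in outline, and deferring the distortion and finite-projective-diameter estimates to \cite{PP} is legitimate, given that the paper treats the entire lemma as a citation. What your sketch buys over the paper's bare citation is that it makes the hypotheses visible --- and this surfaces a real issue.

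The issue, which to your credit you flagged yourself, is that the contraction step needs primitivity of $A$ (aperiodicity of the shift), which is exactly the standing hypothesis of \cite[Theorem 2.2]{PP} but is \emph{not} implied by the paper's standing assumption that $A$ be irreducible and not a permutation. However, your proposed repair for the periodic case --- contracting $\widetilde{L}^{p}$ instead of $\widetilde{L}$ --- can only salvage part (i), not part (ii): for irreducible $A$ of period $p>1$ the operator $\lambda_\phi$ has $p$ peripheral eigenvalues $r_\phi e^{2\pi i k/p}$, and $r_\phi^{-n}\lambda_\phi^n(a)$ genuinely fails to converge. Concretely, take the irreducible, non-permutation matrix with $A(1,2)=A(1,3)=A(2,1)=A(3,1)=1$ and all other entries $0$, and $\phi=0$; then $r_\phi=\sqrt{2}$, and $r_\phi^{-n}\lambda_\phi^n(\chi_{U_1})$ equals $0$ on $U_1$ for odd $n$ and equals $1$ on $U_1$ for even $n$, so no uniform (or even pointwise) limit exists. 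Thus statement (ii) is simply false under the paper's stated hypotheses and must be read with the aperiodicity assumption of \cite{PP}; this is a defect of the lemma as transplanted into the paper (one that propagates to its use in Proposition \ref{prop:limitthm}) rather than of your argument, but your closing sentence should assert (ii) only in the aperiodic case instead of suggesting the periodic case can be recovered by passing to $\widetilde{L}^{p}$.
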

Let us denote by
$\FA$ the $C^*$-subalgebra of $\OA$
consisting of fixed elements under the automorphisms
$\gamma^A_t, t \in \R$
of the gauge action.
The algebra is an AF-algebra  
whose diagonal elements give rise to the algebra
$\DA$.
There is a canonical conditional expectation
$E_D: \FA \rightarrow \DA$
by taking diagonal elements.
Let us denote by
$E_A:\OA \rightarrow \DA$
the canonical conditional expectation
defined by 
$E_A(T) = E_D(\frac{1}{2\pi}\int_{0}^{2\pi} \gamma^A_t(T)dt)$ 
for $T \in \OA$
where $dt$ is the Lebesgue measure on $[0,2\pi]$.

We regard a function $f$ in $C(X_A,\R)$ 
as an element of the subalgebra $C(X_A, \mathbb{C})(=\DA)$ of $\OA$.
For $f \in C(X_A, \R)$,
recall that 
a one-parameter automorphism group
$\gamma_t^f, t \in \R$ on $\OA$ 
is defined by 
\begin{equation*}
\gamma_t^f(S_j) = V_t(f) S_j, \quad j=1,\dots,N,
\quad
\text{ where }
V_t(f) = \exp({i t f}).
\end{equation*}
It is easy to see that
the automorphisms
$\gamma_t^f, t \in \R$ yield an action of 
$\R$ on $\OA$
 such that
$\gamma^f_t(a) =a$
for all $a \in \DA$.
If the function $f$ is constantly $1$,
the action $\gamma^1$ is the gauge action 
$\gamma^A$.
Through the identification
between
$\chi_{U_\mu} \in C(X_A)$
and
$S_\mu S_\mu^* \in \DA$,
we have
$ \sum_{i=1}^N S_i f S_i^* = f \circ \sigma_A$
so that the equalities
\begin{equation*}
S_i V_t(f) 
= \sum_{j=1}^N S_j V_t(f) S_j^* S_i 
= V_t(f\circ\sigma_A) S_i, 
\quad i=1,\dots,N 
\end{equation*}
hold.
The identity
\begin{equation*}
\gamma_t^f(S_\mu) = V_t(f^n)S_\mu,
\qquad \mu = (\mu_1,\dots,\mu_n) \in B_n(X_A), \, \,  t \in \R
\end{equation*}
is directly shown (cf. \cite[Lemma 3.1]{MaMZ2017})
and useful in our proof of Proposition \ref{prop:KMSRuelle} below.

Let us denote by
${\frak S}(\OA)$ the state space of $\OA$
which is the set of continuous positive linear functionals $\varphi$
on $\OA$ satisfying $\varphi(1) = 1$.
 We similarly define the state space 
 ${\frak S}(\DA)$ of $\DA$.
Let $\alpha$ be an action of $\R$ on $\OA$.
Denote by $\OaA$ the set of analytic elements of the action 
$\alpha$ (\cite[8.12]{Pedersen}).
Following after \cite{BR}, 
for a real number $1<\beta \in \R$,
a state 
$\psi \in {\frak S}(\OA)$ is called a $\log{\beta}$-{\it KMS state} 
for the $\alpha$-action 
if $\psi$ satisfies the  condition
\eqref{eq:KMS}.
The equality \eqref{eq:KMS}
is called the KMS condition.

The following proposition
has been seen in Exel's paper \cite{ExelBBMS2004}
in a slightly different form.
We will give a direct proof which is different from 
those of \cite{ExelBBMS2004} and \cite{ExelJFA2003}.

\noindent
\begin{proposition}[{Exel \cite{ExelBBMS2004}}]\label{prop:KMSRuelle}
\hspace{7cm}
\begin{enumerate}
\renewcommand{\theenumi}{\roman{enumi}}
\renewcommand{\labelenumi}{\textup{(\theenumi)}}
\item
For $f \in F_\theta(X_A)$
and
$1 <\beta \in \R$, 
if $\psi \in {\frak S}(\OA)$ is a $\log\beta$-KMS state for the
$\gamma^f$-action,
by putting
$\phi = (1 - f){\log\beta}$,
the restriction 
$\varphi = \psi|_{\DA}\in {\frak S}(\DA)$ 
of $\psi$ to $\DA$
satisfies
$\varphi \circ \lambda_\phi = \beta \varphi$ on $\DA$.
\item
For $\phi \in F_\theta(X_A)$
and
$1 <\beta \in \R$, 
if $\varphi \in {\frak S}(\DA)$ is a state
satisfying
$\varphi \circ \lambda_\phi = \beta \varphi$ on $\DA$,
then by putting
$f = 1 - \frac{1}{\log\beta}\phi$,
the state
$\psi =\varphi\circ E_A \in {\frak S}(\OA)$
is a $\log\beta$-KMS state for the $\gamma^f$-action.
\end{enumerate}
\end{proposition}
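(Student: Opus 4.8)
\noindent\textit{Proof plan.}
The plan is to treat the two implications separately: (i) is a one-line consequence of the KMS condition \eqref{eq:KMS}, while (ii) is the substantive direction, where I would reduce the KMS condition to the generators and then match it against the Ruelle eigenvalue equation.

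For (i), I would first make the action explicit on analytic elements. Since $\gamma^f_t(a)=a$ for $a\in\DA$ and $\gamma^f_t(S_i)=V_t(f)S_i=e^{itf}S_i$, each $S_i$ is analytic with analytic continuation $\gamma^f_{i\log\beta}(S_i)=e^{-(\log\beta)f}S_i=\beta^{-f}S_i$. With $\phi=(1-f)\log\beta$ one has $e^{\phi}=\beta^{1-f}=\beta\,\beta^{-f}$, so the Ruelle operator may be rewritten for $a\in\DA$ as $\lambda_\phi(a)=\sum_{i=1}^N S_i^*a\,e^\phi S_i=\beta\sum_{i=1}^N S_i^*a\,\gamma^f_{i\log\beta}(S_i)$. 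Because $\lambda_\phi(a)\in\DA$ and $\varphi=\psi|_{\DA}$, I would apply $\psi$ and invoke \eqref{eq:KMS} with $x=S_i$ and $y=S_i^*a$, which gives $\psi(S_i^*a\,\gamma^f_{i\log\beta}(S_i))=\psi(S_iS_i^*a)$. Summing over $i$ and using $\sum_i S_iS_i^*=1$ yields $\varphi(\lambda_\phi(a))=\beta\sum_i\psi(S_iS_i^*a)=\beta\psi(a)=\beta\varphi(a)$, the desired eigenvalue equation.

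For (ii), set $\psi=\varphi\circ E_A$, which is a state since $E_A$ is a conditional expectation, and recall that with $f=1-\tfrac{1}{\log\beta}\phi$ we again have $\phi=(1-f)\log\beta$ and $\gamma^f_{i\log\beta}(S_j)=\beta^{-f}S_j$. Since it suffices to verify \eqref{eq:KMS} on the dense $*$-subalgebra spanned by the elements $S_\mu S_\nu^*$, I would reduce to the generators by two standard observations: the KMS relation is multiplicative in the left variable (if it holds for $x_1$ and for $x_2$ against every $y$, then it holds for $x_1x_2$ against every $y$), and it is symmetric under $x\mapsto x^*$ (it holds for $x^*$ iff it holds for $x$, using that $\psi$ is a state and $\gamma^f_{-i\log\beta}$ inverts $\gamma^f_{i\log\beta}$). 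As every $S_\mu S_\nu^*$ is a product of the analytic elements $S_j,S_j^*$, it is then enough to prove $\psi(S_j y)=\psi(y\,\beta^{-f}S_j)$ for all $y$, and by norm-continuity and linearity in $y$ it suffices to take $y=S_\mu S_\nu^*$.

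The computation relies on the single-letter identity $S_a^*gS_b=\delta_{a,b}\,(S_a^*gS_a)$ with $(S_a^*gS_a)(x)=g(ax)$ for $g\in\DA$ (where $ax$ denotes $x$ with the symbol $a$ prepended, the value being $0$ when $ax$ is inadmissible), together with $E_A(S_\mu S_\kappa^*)=\delta_{\mu,\kappa}S_\mu S_\mu^*$ and the $\DA$-bimodule property of $E_A$. Writing $\nu=(\nu_1,\nu'')$, the left side $\psi(S_{j\mu}S_\nu^*)=\varphi(E_A(S_{j\mu}S_\nu^*))$ is nonzero only when $\nu=j\mu$, where it equals $\varphi(\chi_{U_{j\mu}})$; the right side collapses, via $S_{\nu_1}^*\beta^{-f}S_j=\delta_{\nu_1,j}\,\widetilde g$ with $\widetilde g(x)=\beta^{-f(jx)}$, to $\delta_{\nu_1,j}\delta_{\mu,\nu''}\,\varphi(\chi_{U_\mu}\,\widetilde g)$, nonzero under exactly the same condition $\nu=j\mu$. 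Everything thus reduces to the single identity $\varphi(\chi_{U_{j\mu}})=\varphi(\chi_{U_\mu}\,\beta^{-f(j\,\cdot)})$, and this is precisely where the hypothesis enters: applying $\varphi\circ\lambda_\phi=\beta\varphi$ to $a=\chi_{U_{j\mu}}$, using $e^\phi=\beta\,\beta^{-f}$ and the same commutation identity, produces exactly this relation. The main obstacle is the bookkeeping in this last stage, namely tracking how $E_A$, the Cuntz--Krieger relations, and the prepend-a-letter action combine so that the two sides of the KMS condition land on the cylinder-set form of the Ruelle eigenvalue equation; once that form is isolated the matching is immediate.
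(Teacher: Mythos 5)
Your part (i) is exactly the paper's argument: rewrite $\lambda_\phi(a)=\beta\sum_j S_j^*a\,\gamma^f_{i\log\beta}(S_j)$ using $e^\phi=\beta\,\beta^{-f}$ and apply the KMS condition termwise with $x=S_j$, $y=S_j^*a$. Part (ii), however, takes a genuinely different route. The paper verifies the KMS identity $\psi(y\gamma^f_{i\log\beta}(x))=\psi(xy)$ head-on for four explicit subcases of pairs $x=aS_\nu^*,\ y=S_\nu b$ (and the reversed case), each by a long chain of applications of $\varphi\circ\lambda_\phi^{\ell}=\beta^{\ell}\varphi$ together with the identity $\phi^{\ell}+f^{\ell}\log\beta=\ell\log\beta$. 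You instead observe that the set of analytic $x$ satisfying the KMS relation against every $y$ is a linear subspace closed under multiplication, contains $\DA$ (since $\psi=\varphi\circ E_A$ is $\DA$-central), and is stable under adjoints on the generators, so everything collapses to the single relation $\psi(S_jy)=\psi(y\,\beta^{-f}S_j)$ for $y=S_\mu S_\nu^*$, which after applying $E_A$ is precisely $\varphi\circ\lambda_\phi=\beta\varphi$ evaluated on $\chi_{U_{j\mu}}$. This is shorter and more conceptual; the paper's computation is self-contained but repetitive. One point you should make explicit: the adjoint symmetry is not literally ``KMS for $x^*$ iff KMS for $x$'' but ``KMS for $x^*$ iff KMS for $\gamma^f_{-i\log\beta}(x)$''; for $x=S_j$ this is $\beta^{f}S_j$, which lies in your multiplicative set only because $\beta^{f}\in\DA$ and the KMS relation holds trivially on $\DA$ for $\psi=\varphi\circ E_A$ (by the $\DA$-bimodule property of $E_A$). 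With that step spelled out, your argument is complete and correct.
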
  
\begin{proof}
(i)
Since $e^{\phi} = \beta^{1-f} \in \DA$
and
$\beta^{-f}S_j = \gamma_{i\log{\beta}}^f(S_j)$,
we have
\begin{equation*}
\lambda_\phi(a) 
= \sum_{j=1}^N S_j^* a e^{\phi} S_j
= \beta \sum_{j=1}^N S_j^* a \beta^{-f} S_j
= \beta \sum_{j=1}^N S_j^* a \gamma_{i\log{\beta}}^f(S_j), 
\qquad a \in \DA.
\end{equation*}
Let us note that 
the set of finite linear combinations 
of elements of the form
$S_\mu S_\nu^*, \, \mu, \nu \in B_*(X_A)$ 
is dense in the analytic elements of $\OA$
for the action $\gamma^f$.
By the KMS condition \eqref{eq:KMS}, we have
\begin{equation*}
(\varphi\circ \lambda_\phi)(a) 
= \beta \sum_{j=1}^N \psi(S_j^* a \gamma_{i\log{\beta}}^f(S_j))
= \beta \sum_{j=1}^N \psi(S_j S_j^* a )
= \beta \varphi(a),
\qquad a \in \DA.
\end{equation*}

(ii)
Suppose that $\varphi \in {\frak S}(\DA)$
satisfies
$\varphi\circ \lambda_\phi = \beta \varphi$.
Put
$f = 1 - \frac{1}{\log\beta}\phi$
and 
the state
$\psi =\varphi\circ E_A \in {\frak S}(\OA)$.
Since the set of finite linear combinations 
of elements of the form
$S_\mu S_\nu^*, \, \mu, \nu \in B_*(X_A)$ 
is dense in $\OaA$
and $\psi$ vanishes outside of diagonal elements of 
$\FA$,
it suffices to show the KMS condition \eqref{eq:KMS} 
only for the following two cases and two subcases for each.

Case (1)  $x=a S_\nu^*, y = S_\nu b$.

\noindent
Subcase (1-1) $ a= S_\xi S_\eta^*$ and $b = S_\eta S_\zeta S_\zeta^* S_\xi^*$
where $|\xi| = |\eta|$.

\noindent
Subcase (1-2) $ a= S_\eta S_\zeta S_\zeta^* S_\xi^*$ and $b = S_\xi S_\eta^*$
where $|\xi| = |\eta|$.

Case (2)  $x=S_\nu b, y = a S_\nu^*$.

\noindent
Subcase (2-1) $ a= S_\xi S_\eta^*$ and $b = S_\eta S_\zeta S_\zeta^* S_\xi^*$
where $|\xi| = |\eta|$.

\noindent
Subcase (2-2) $ a= S_\eta S_\zeta S_\zeta^* S_\xi^*$ and $b = S_\xi S_\eta^*$
where $|\xi| = |\eta|$.

Throughout the above four subcases,
we put
$n = |\nu |, \, 
k = |\xi| = |\eta |, \, 
m = |\zeta|.
$
We note the identity
$
\phi^\ell + f^\ell\log\beta =\ell \log\beta$ for $\ell \in \N$
holds.
To show the equality
$\psi(y \gamma_{i\log\beta}^f(x)) =\psi(x y)$
for each of the  four subcases, we use the equality
$\varphi\circ \lambda_\phi = \beta \varphi$ of the
assumption.

For the subcase (1-1), we have
\begin{align*}
\psi(y \gamma_{i\log\beta}^f(x))
& = \frac{1}{\beta^n}
    \varphi(\lambda_\phi^n(
    S_\nu S_\eta S_\zeta S_\zeta^* S_\xi^* 
    \gamma_{i\log\beta}^f(S_\xi S_\eta^* S_\nu^*)) \\
& = \frac{1}{\beta^n}
    \varphi(S_\nu^* 
    S_\nu S_\eta S_\zeta S_\zeta^* S_\xi^* 
    e^{-f^k\log\beta} S_\xi S_\eta^* e^{f^k\log\beta} S_\nu^*e^{f^n\log\beta} 
    e^{\phi^n} S_\nu) \\
& = \frac{1}{\beta^n}
    \varphi( S_\eta S_\zeta S_\zeta^* S_\xi^* 
    e^{-f^k\log\beta} S_\xi S_\eta^* e^{f^k\log\beta} S_\nu^*
    e^{f^n\log\beta +\phi^n} 
    S_\nu)  \\   
& = \varphi( 
    S_\eta S_\zeta S_\zeta^* S_\xi^* 
    e^{-f^k\log\beta} S_\xi S_\eta^* e^{f^k\log\beta} S_\nu^* S_\nu) \\     
& = \frac{1}{\beta^k}
    \varphi(\lambda_\phi^k(
    S_\eta S_\zeta S_\zeta^* S_\xi^* 
    e^{-f^k\log\beta} S_\xi S_\eta^* e^{f^k\log\beta} S_\nu^* S_\nu)) \\     
& = \frac{1}{\beta^k}
    \varphi(S_\eta^* 
    S_\eta S_\zeta S_\zeta^* S_\xi^* 
    e^{-f^k\log\beta} S_\xi S_\eta^* e^{f^k\log\beta} S_\nu^* S_\nu
    e^{\phi^k} S_\eta)  \\    
& = \frac{1}{\beta^k}
    \varphi(S_\eta^* e^{f^k\log\beta} S_\nu^* S_\nu
    S_\eta S_\eta^* e^{\phi^k}
    S_\eta S_\zeta S_\zeta^* S_\xi^* 
    e^{-f^k\log\beta} S_\xi) \\ 
& = \frac{1}{\beta^k}
    \varphi(S_\eta^* e^{f^k\log\beta + \phi^k}
    S_\nu^* S_\nu S_\eta S_\zeta S_\zeta^* S_\xi^* 
    e^{-f^k\log\beta} S_\xi)  \\
& = \varphi(S_\eta^* 
    S_\nu^* S_\nu S_\eta S_\zeta S_\zeta^* S_\xi^* 
    e^{-f^k\log\beta} S_\xi) \\  
& = \varphi(S_\xi^* S_\xi S_\eta^* 
    S_\nu^* S_\nu S_\eta S_\zeta S_\zeta^* S_\xi^* 
    e^{-f^k\log\beta -\phi^k} e^{\phi^k} S_\xi) \\
& = \varphi(\lambda_\phi^k( S_\xi S_\eta^* 
    S_\nu^* S_\nu S_\eta S_\zeta S_\zeta^* S_\xi^* 
    e^{-f^k\log\beta -\phi^k} ))\\
& = \varphi(\lambda_\phi^k( S_\xi S_\eta^* 
    S_\nu^* S_\nu S_\eta S_\zeta S_\zeta^* S_\xi^* 
    e^{-k\log\beta} )) 
 = \psi( x y).
\end{align*}

For the subcase (1-2), we have
\begin{align*}
\psi(y \gamma_{i\log\beta}^f(x))
& = \varphi(
    S_\nu S_\xi S_\eta^*
    e^{-f^{k+m}\log\beta} S_\eta S_\zeta S_\zeta^* S_\xi^*e^{f^{k+m}\log\beta} 
    S_\nu^* e^{f^n\log\beta}) \\
& = \frac{1}{\beta^n}\varphi(\lambda_\phi^n(
    S_\nu S_\xi S_\eta^*
    e^{-f^{k+m}\log\beta} S_\eta S_\zeta S_\zeta^* S_\xi^*e^{f^{k+m}\log\beta} 
    S_\nu^* e^{f^n\log\beta})) \\
& = \frac{1}{\beta^n}\varphi(
    S_\nu^* S_\nu S_\xi S_\eta^*
    e^{-f^{k+m}\log\beta} S_\eta S_\zeta S_\zeta^* S_\xi^*e^{f^{k+m}\log\beta} 
    S_\nu^* e^{f^n\log\beta +\phi^n} S_\nu) \\
& = \varphi(
    S_\xi S_\eta^*
    e^{-f^{k+m}\log\beta} S_\eta S_\zeta S_\zeta^* S_\xi^*e^{f^{k+m}\log\beta} 
    S_\nu^* S_\nu) \\
& = \frac{1}{\beta^{k+m}}\varphi(\lambda_\phi^{k+m} (
    S_\xi S_\eta^*
    e^{-f^{k+m}\log\beta} S_\eta S_\zeta S_\zeta^* S_\xi^*e^{f^{k+m}\log\beta} 
    S_\nu^* S_\nu) ) \\
& = \frac{1}{\beta^{k+m}}\varphi(
    S_\zeta^* S_\xi^*
    S_\xi S_\eta^*
    e^{-f^{k+m}\log\beta} S_\eta S_\zeta S_\zeta^* S_\xi^*e^{f^{k+m}\log\beta} 
    S_\nu^* S_\nu e^{\phi^{m+k}} S_\xi S_\zeta) \\
& = \varphi(
    S_\zeta^* S_\eta^*
    e^{-f^{k+m}\log\beta} S_\eta S_\zeta S_\zeta^* S_\xi^* 
    S_\nu^* S_\nu  S_\xi S_\zeta) \\
& = \varphi(\lambda_\phi^{k+m}(
    e^{- \phi^{k+m} -f^{k+m}\log\beta} S_\eta S_\zeta S_\zeta^* S_\xi^* 
    S_\nu^* S_\nu  S_\xi S_\zeta S_\zeta^* S_\eta^*)) \\
& = \varphi(
    S_\eta S_\zeta S_\zeta^* S_\xi^* 
    S_\nu^* S_\nu  S_\xi S_\zeta S_\zeta^* S_\eta^*) 
 = \psi( x y).
\end{align*}

For the subcase (2-1), we have
\begin{align*}
\psi(y \gamma_{i\log\beta}^f(x))
& = \varphi(S_\xi S_\eta^* S_\nu^*
    e^{-f^n\log\beta} S_\nu e^{-f^{k+m}\log\beta} S_\eta S_\zeta S_\zeta^* S_\xi^*
    e^{f^{k+m}\log\beta}) \\
& = \frac{1}{\beta^{k+m}} \varphi(\lambda_\phi^{k+m}(S_\xi S_\eta^* S_\nu^*
    e^{-f^n\log\beta} S_\nu e^{-f^{k+m}\log\beta} S_\eta S_\zeta S_\zeta^* S_\xi^*
    e^{f^{k+m}\log\beta})) \\
& = \frac{1}{\beta^{k+m}} \varphi( S_\zeta^* S_\xi^*
    S_\xi S_\eta^* S_\nu^*
    e^{-f^n\log\beta} S_\nu e^{-f^{k+m}\log\beta} S_\eta S_\zeta S_\zeta^* S_\xi^*
    e^{f^{k+m}\log\beta + \phi^{k+m}}S_\xi S_\zeta ) \\
& = \varphi( S_\zeta^* S_\xi^*
    S_\xi S_\eta^* S_\nu^*
    e^{-f^n\log\beta} S_\nu e^{-f^{k+m}\log\beta} S_\eta S_\zeta S_\zeta^* S_\xi^*
    S_\xi S_\zeta ) \\
& = \varphi( S_\zeta^* S_\xi^*
    S_\xi S_\eta^* S_\nu^*
    e^{-f^n\log\beta} S_\nu e^{-f^{k+m}\log\beta} S_\eta S_\zeta S_\zeta^* S_\eta^* 
    S_\eta S_\zeta) \\
& = \varphi( \lambda_\phi^{k+m}(S_\eta S_\zeta  S_\zeta^* S_\xi^*
    S_\xi S_\eta^* S_\nu^*
    e^{-f^n\log\beta} S_\nu e^{-f^{k+m}\log\beta} S_\eta S_\zeta  S_\zeta^* S_\eta^*
    e^{-\phi^{k+m}})) \\
& = \beta^{k+m} \varphi(S_\eta S_\zeta  S_\zeta^* S_\xi^*
    S_\xi S_\eta^* S_\nu^*
    e^{-f^n\log\beta} S_\nu e^{-f^{k+m}\log\beta -\phi^{k+m}}) \\
& = \varphi(S_\eta S_\zeta  S_\zeta^* S_\xi^*
    S_\xi S_\eta^* S_\nu^*
    e^{-f^n\log\beta} S_\nu S_\nu^* S_\nu ) \\
& = \varphi(\lambda_\phi^n( S_\nu S_\eta S_\zeta  S_\zeta^* S_\xi^*
    S_\xi S_\eta^* S_\nu^*
    e^{-f^n\log\beta} S_\nu  S_\nu^* e^{-\phi^n} )) \\
& = \beta^n \varphi( S_\nu S_\eta S_\zeta  S_\zeta^* S_\xi^*
    S_\xi S_\eta^* S_\nu^*
    e^{-f^n\log\beta -\phi^n} S_\nu  S_\nu^* ) \\
& = \varphi( S_\nu S_\eta S_\zeta  S_\zeta^* S_\xi^*
    S_\xi S_\eta^* S_\nu^*) 
 = \psi( x y).
\end{align*}

For the subcase (2-2), we have
\begin{align*}
\psi(y \gamma_{i\log\beta}^f(x))
& = \varphi( S_\eta S_\zeta S_\zeta^* S_\xi^* S_\nu^*
    e^{-f^{k+m}\log\beta} S_\nu S_\xi S_\eta^* e^{f^k\log\beta}) \\
& = \frac{1}{\beta^k} \varphi(\lambda_\phi^k( S_\eta S_\zeta S_\zeta^* S_\xi^* S_\nu^*
    e^{-f^{k+m}\log\beta} S_\nu S_\xi S_\eta^* e^{f^k\log\beta})) \\
& = \frac{1}{\beta^k} \varphi(S_\eta^* S_\eta S_\zeta S_\zeta^* S_\xi^* S_\nu^*
    e^{-f^{k+m}\log\beta} S_\nu S_\xi S_\eta^* e^{f^k\log\beta} e^{\phi^k}S_\eta) \\
& = \varphi(S_\eta^* S_\eta S_\zeta S_\zeta^* S_\xi^* S_\nu^*
    e^{-f^{k+m}\log\beta} S_\nu S_\xi ) \\
& = \varphi( \lambda_\phi^{k+m}(S_\nu S_\xi S_\eta^* S_\eta S_\zeta S_\zeta^* S_\xi^* S_\nu^*
    e^{-f^{k+m}\log\beta -\phi^{k+m}} ) ) \\
& = \varphi( S_\nu S_\xi S_\eta^* S_\eta S_\zeta S_\zeta^* S_\xi^* S_\nu^*) 
 = \psi( x y).
\end{align*}
Therefore $\psi$ is a $\log\beta$-KMS state for the $\gamma^f$-action on $\OA$.
\end{proof} 
R. Exel proved the following fact that will be useful in our further discussions. 
\begin{lemma}[{Exel \cite{ExelBBMS2004}}]\label{lem:Exellemma}
For $f \in F_\theta(X_A),$
there exists a unique positive real number
$\beta_f \in \R$ such that 
there exists a $\log\beta_f$-KMS state for the $\gamma^f$-action 
on $\OA$.
The admitted KMS state is faithful and unique.
\end{lemma}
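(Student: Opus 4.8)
The plan is to use Proposition \ref{prop:KMSRuelle} as a dictionary that turns the existence of a KMS state into an eigenvalue problem for the Ruelle operator, and then to reduce that eigenvalue problem, via the Ruelle--Perron--Frobenius Lemma \ref{lem:RPF}, to a single scalar equation for the inverse temperature. Fix $f \in F_\theta(X_A)$ and, for $\beta>1$, set $\phi_\beta=(1-f)\log\beta$, so that $f=1-\tfrac{1}{\log\beta}\phi_\beta$. By Proposition \ref{prop:KMSRuelle}(i)--(ii) a $\log\beta$-KMS state for the $\gamma^f$-action exists if and only if there is a state $\varphi$ on $\DA$ with $\varphi\circ\lambda_{\phi_\beta}=\beta\varphi$. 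The first key observation is that Lemma \ref{lem:RPF} pins $\beta$ down: iterating such a $\varphi$ gives $\varphi(\lambda_{\phi_\beta}^n(1)/r_{\phi_\beta}^n)=(\beta/r_{\phi_\beta})^n$, while by Lemma \ref{lem:RPF}(ii) the left-hand side converges to $\varphi_{\phi_\beta}(1)\,\varphi(g_{\phi_\beta})=\varphi(g_{\phi_\beta})$, a finite strictly positive number since $g_{\phi_\beta}$ is strictly positive and $\varphi$ is a state. Hence $\beta=r_{\phi_\beta}$, and running the same computation for general $a\in\DA$ forces $\varphi=\varphi_{\phi_\beta}$. Thus the existence of a $\log\beta$-KMS state is equivalent to the fixed-point condition $\beta=r_{\phi_\beta}$, and when it holds the eigenstate on $\DA$ is unique.

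Next I would linearize this fixed-point condition. Writing the pressure $P(\psi):=\log r_\psi$, the scaling $\lambda_{\psi+c}=e^{c}\lambda_\psi$ for a constant $c$ yields $r_{\psi+c}=e^{c}r_\psi$, i.e. $P(\psi+c)=P(\psi)+c$. Since $\phi_\beta=\log\beta\cdot 1-f\log\beta$, the condition $\log\beta=P(\phi_\beta)$ becomes, after cancelling $\log\beta$, the equation $P(-sf)=0$ with $s=\log\beta$. It therefore remains to prove that $F(s):=\log r_{-sf}$ has a unique positive root $s_f$; then $\beta_f:=e^{s_f}>1$ is the required inverse temperature.

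To solve this scalar equation I would use the crude bounds coming from Lemma \ref{lem:RPF}. Applying $\varphi_\psi\circ\lambda_\psi=r_\psi\varphi_\psi$ to $1$ gives $r_\psi=\varphi_\psi(\lambda_\psi(1))$, so $r_\psi$ lies between $\min_x\lambda_\psi(1)(x)$ and $\max_x\lambda_\psi(1)(x)$; here $\lambda_{-sf}(1)(x)=\sum_{\sigma_A(y)=x}e^{-sf(y)}$. At $s=0$ we get $F(0)=\log r_A=h_{top}(\sigma_A)>0$, since $A$ is irreducible and not a permutation. Using strict positivity of $f$, each summand is at most $e^{-s\min f}$ and there are at most $N$ preimages, so $r_{-sf}\le N e^{-s\min f}\to 0$ and $F(s)\to-\infty$ as $s\to\infty$. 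For strict monotonicity I would invoke monotonicity of the Perron eigenvalue in the potential (which follows from $\lambda_\psi(a)=\lambda_A(ae^\psi)$ and positivity of $\lambda_A$, together with the convergence in Lemma \ref{lem:RPF}(ii)): for $s_1<s_2$ one has $-s_2f\le -s_1f-\delta$ with $\delta=(s_2-s_1)\min f>0$, whence $r_{-s_2f}\le e^{-\delta}r_{-s_1f}<r_{-s_1f}$. So $F$ is continuous, strictly decreasing, positive at $0$, and tends to $-\infty$, giving a unique root $s_f>0$. I expect this step --- the monotonicity of the eigenvalue and the asymptotic $F(s)\to-\infty$, the latter resting on the strict positivity of $f$ (automatic in the applications, e.g. $f\equiv 1$ for the gauge action) --- to be the main technical point of the argument.

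Finally, for uniqueness and faithfulness of the admitted state I would note that at $\beta=\beta_f$ the construction in Proposition \ref{prop:KMSRuelle}(ii) produces $\psi=\varphi_{\phi_{\beta_f}}\circ E_A$, which is faithful because $\varphi_{\phi_{\beta_f}}$ is faithful on $\DA$ by Lemma \ref{lem:RPF} and $E_A:\OA\to\DA$ is a faithful conditional expectation. Conversely, any $\log\beta_f$-KMS state $\psi'$ restricts by Proposition \ref{prop:KMSRuelle}(i) to an eigenstate of $\lambda_{\phi_{\beta_f}}$ with eigenvalue $r_{\phi_{\beta_f}}$, hence agrees with $\varphi_{\phi_{\beta_f}}$ on $\DA$ by the uniqueness in Lemma \ref{lem:RPF}; combined with the standard fact that a KMS state for $\gamma^f$ is invariant under $E_A$ (it annihilates the off-diagonal elements $S_\mu S_\nu^*$ with $\mu\ne\nu$), this yields $\psi'=\varphi_{\phi_{\beta_f}}\circ E_A=\psi$, proving both uniqueness and faithfulness.
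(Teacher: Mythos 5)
The paper gives no proof of this lemma at all --- it is imported verbatim from Exel \cite{ExelBBMS2004} --- so there is no internal argument to compare yours against. Your route (use Proposition \ref{prop:KMSRuelle} to convert the KMS condition into the eigenvalue equation $\varphi\circ\lambda_{\phi_\beta}=\beta\varphi$, use Lemma \ref{lem:RPF}(ii) to force $\beta=r_{\phi_\beta}$ and $\varphi=\varphi_{\phi_\beta}$, then reduce to the scalar pressure equation $\log r_{-sf}=0$) is essentially Exel's own strategy, and the steps you do carry out are sound: the identification $r_{\phi_\beta}=\beta\,r_{-f\log\beta}$, the sandwich bound $r_\psi\le\max_x\lambda_\psi(1)(x)$, the monotonicity $r_{-s_2f}\le e^{-\delta}r_{-s_1f}$, and the faithfulness of $\varphi_{\phi_{\beta_f}}\circ E_A$ are all correct. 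Two steps are asserted rather than proved. Continuity of $s\mapsto\log r_{-sf}$ follows from the same sandwich technique (one gets the Lipschitz bound $|\log r_{-sf}-\log r_{-s'f}|\le|s-s'|\,\|f\|_\infty$), so that is a small gap. The ``standard fact'' that a $\log\beta_f$-KMS state kills every off-diagonal $S_\mu S_\nu^*$ is not automatic for generalized gauge actions: for $|\mu|=|\nu|$, $\mu\ne\nu$ it follows from $S_\nu^*dS_\mu=0$ for diagonal $d$, but for $|\mu|\ne|\nu|$ you must iterate the KMS condition, picking up a factor of norm at most $\beta_f^{-(|\mu|-|\nu|)\min f}$ at each pass, and this tends to $0$ only because $\min f>0$ and $\beta_f>1$.

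That points to the one genuine issue. Your existence argument ($F(s)\to-\infty$), your strict monotonicity, and the off-diagonal vanishing all rest on $\inf f>0$, a hypothesis absent from the statement as the paper records it. It cannot be dropped: for $f\equiv 0$ the action $\gamma^f$ is trivial, a KMS state would be a trace, and the purely infinite simple algebra $\OA$ admits none, so no $\beta_f$ exists. Moreover your parenthetical that positivity is ``automatic in the applications'' is not accurate for this paper: Lemma \ref{lem:COE3.2} invokes the present lemma with $f=c_1$ and $f=c_2$, which are merely $\Zp$-valued cocycle functions and may vanish --- indeed $c_2=0$ on the cylinder $U_2$ in the Section 4 example. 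So your argument proves the lemma under the added hypothesis $\inf f>0$; covering the potentials the paper actually feeds into it would require either checking Exel's precise hypotheses or a separate treatment of potentials with zeros.
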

\section{Continuous orbit equivalence and KMS conditions}
Throughout the section,
two irreducible square matrices $A,B$ with entries in $\{0,1\}$ 
are fixed. 
The Ruelle operator $\lambda_\phi$ on $\DA$ 
is denote by $\lambda^A_\phi$,
and the action $\gamma^f$ on $\OA$ 
is denoted by $\gamma^{A,f}$.
We use similar notation $\lambda_\phi^B$ and $\gamma^{B,f}$
for the matrix $B$.

Let
$h : X_A \rightarrow X_B$
be a homeomorphism
which gives rise to a continuous orbit equivalence
between
$(X_A, \sigma_A)$
and
$(X_B,\sigma_B)$.
Take $k_1, l_1 \in C(X_A,\Zp)$
and $k_2, l_2 \in C(X_B,\Zp)$
satisfying \eqref{eq:orbiteq1x}
and \eqref{eq:orbiteq2y}, respectively.
Recall that 
$c_1\in C(X_A,\Zp)$
and 
$c_2\in C(X_B,\Zp)$
are defined by 
$c_1(x) = l_1(x) - k_1(x), x \in X_A$
and
$c_2(y) = l_2(y) - k_2(y), y \in X_B$,
respectively.
\begin{proposition}[{\cite[Corollary 3.4]{MaMZ2017}}]\label{prop:cocycleconjugacy}
Let
$h : X_A \rightarrow X_B$
be a homeomorphism
which gives rise to a continuous orbit equivalence
between
$(X_A, \sigma_A)$
and
$(X_B,\sigma_B)$.
Then there exists an isomorphism
$\Phi:\OA \rightarrow \OB$ such that 
$\Phi(\DA) = \DB$ and
\begin{equation}
\Phi \circ  \gamma^{A}_t 
 = \gamma^{B,c_2}_t \circ \Phi
\quad
\text{ and }
\quad
\Phi^{-1} \circ  \gamma^{B}_t 
 = \gamma^{A,c_1}_t \circ \Phi^{-1}
\qquad
\text{ for } \,\,
t \in \R. \label{eq:3.1}
\end{equation}
\end{proposition}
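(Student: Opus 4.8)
The plan is to realize both Cuntz--Krieger algebras as groupoid $C^*$-algebras, transport $h$ to an isomorphism of the underlying groupoids, and then read off both the statement $\Phi(\DA)=\DB$ and the two intertwining relations from the way the relevant cocycles transform. Let $G_A$ denote the Deaconu--Renault groupoid
\[
G_A = \{(x,n,y) \in X_A \times \Z \times X_A \mid \exists\, k,l \in \Zp,\ n=k-l,\ \sigma_A^k(x)=\sigma_A^l(y)\},
\]
whose unit space is identified with $X_A$. Under the standard identification $\OA \cong C^*(G_A)$ the diagonal $\DA$ becomes $C(X_A)$ (functions on the unit space), the gauge action $\gamma^A$ is dual to the canonical integer cocycle $\partial_A(x,n,y)=n$, and, for $f \in C(X_A,\R)$, the generalized gauge action $\gamma^{A,f}$ is dual to the cocycle $\rho^A_f(x,n,y)=f^k(x)-f^l(y)$, so that $\rho^A_1=\partial_A$. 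I use the parallel notation $G_B,\partial_B,\rho^B_f$ for $B$.

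First I would produce a groupoid isomorphism $\Psi : G_A \to G_B$ with $\Psi|_{X_A}=h$. Since $h$ carries $\sigma_A$-orbits to $\sigma_B$-orbits, for $(x,n,y)\in G_A$ the points $h(x),h(y)$ share a $\sigma_B$-orbit, and I would set $\Psi(x,n,y) = (h(x),\, \rho^A_{c_1}(x,n,y),\, h(y))$, the middle integer being obtained by iterating the relation \eqref{eq:orbiteq1x}. One must check that $\Psi$ is well defined (independence of the representatives $k,l$ is the cocycle property of $\rho^A_{c_1}$), continuous (continuity of $k_1,l_1$ and of their sums along $\sigma_A$-orbits), and a groupoid homomorphism, with inverse built symmetrically from $h^{-1}$ and $c_2$ through \eqref{eq:orbiteq2y}. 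Then $\Phi := \Psi_* : C^*(G_A)\to C^*(G_B)$, $\Phi(F)=F\circ\Psi^{-1}$, is a $*$-isomorphism, and since $\Psi$ maps the unit space $X_A$ onto $X_B$ via $h$ we obtain $\Phi(\DA)=\DB$ immediately.

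It remains to obtain the intertwining of actions, and this is pure cocycle bookkeeping once $\Psi$ is in hand. Because $\gamma^A$ is dual to $\partial_A$ and $\gamma^{B,c_2}$ is dual to $\rho^B_{c_2}$, the relation $\Phi\circ\gamma^A_t = \gamma^{B,c_2}_t\circ\Phi$ is equivalent to the pointwise identity $\partial_A = \rho^B_{c_2}\circ\Psi$ on $G_A$; symmetrically $\Phi^{-1}\circ\gamma^B_t=\gamma^{A,c_1}_t\circ\Phi^{-1}$ amounts to $\partial_B=\rho^A_{c_1}\circ\Psi^{-1}$. As both sides are cocycles, it suffices to verify them on the ``shift'' generators $(x,1,\sigma_A(x))$, where $\Psi(x,1,\sigma_A(x))=(h(x),c_1(x),h(\sigma_A(x)))$ and the first identity collapses to the scalar equality
\[
c_2^{l_1(x)}(h(x)) - c_2^{k_1(x)}(h(\sigma_A(x))) = 1, \qquad x \in X_A,
\]
with the second being its mirror image in $c_1$.

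The main obstacle is exactly this scalar identity, together with the well-definedness and continuity of $\Psi$; all of it reduces to iterating the orbit-equivalence relations \eqref{eq:orbiteq1x}, \eqref{eq:orbiteq2y} and exploiting their compatibility (substituting $h^{-1}$ into the $\sigma_B$-relation and using $h\circ h^{-1}=\id$). A convenient sanity check is the conjugacy case $h=\id$, $A=B$, $c_1=c_2\equiv1$, where the displayed equality reads $c_2(x)=1$. Once the equality holds on the shift generators, the cocycle property propagates it to all of $G_A$ and delivers \eqref{eq:3.1}. I expect the normalization that pairs $\partial_A$ with $\rho^B_{c_2}$ (rather than with $\rho^B_{c_1}$) to be the step most prone to sign errors, so I would fix all conventions on the shift generator before propagating.
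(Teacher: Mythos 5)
The paper offers no proof of this proposition: it is imported verbatim from \cite[Corollary 3.4]{MaMZ2017}, and the argument there is essentially the one you outline --- realize $\OA$ and $\OB$ as the $C^*$-algebras of the Deaconu--Renault groupoids $G_A$, $G_B$, lift $h$ to a groupoid isomorphism whose middle coordinate is governed by $c_1$, and read off \eqref{eq:3.1} from the fact that the canonical $\Z$-valued cocycle on one side pulls back to the cocycle attached to $c_1$ (resp.\ $c_2$) on the other. Your reduction of the intertwining relations to the single scalar identity $c_2^{l_1(x)}(h(x))-c_2^{k_1(x)}(h(\sigma_A(x)))=1$ on the generators $(x,1,\sigma_A(x))$ is correct, as is the observation that two continuous $\Z$-cocycles agreeing on these generators agree on all of $G_A$.

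The one genuine gap is your claim that this scalar identity, and with it the well-definedness of $\Psi^{-1}$, ``reduces to iterating the orbit-equivalence relations and exploiting their compatibility.'' It does not follow by formal substitution alone. Iterating \eqref{eq:orbiteq1x} and then feeding the result into the iterated form of \eqref{eq:orbiteq2y} produces a relation $\sigma_A^{a}(x)=\sigma_A^{b}(x)$ in which $a-b$ equals $c_2^{l_1(x)}(h(x))-c_2^{k_1(x)}(h(\sigma_A(x)))-1$; to conclude $a=b$ one must know that $x$ is not eventually periodic, and the identity for arbitrary $x$ is then obtained from the density of non-eventually-periodic points in $X_A$ together with the continuity (local constancy) of all the integer-valued functions involved. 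This density is precisely where the standing hypothesis that $A$ is irreducible and not a permutation (Cuntz--Krieger's condition (I)) enters; it is also what legitimizes the identification $\OA\cong C^*(G_A)$ with $\DA=C(X_A)$ and the amenability/essential principality needed for $\Psi_*$ to be defined on all of $\OA$. Without this aperiodicity-plus-density step the key identity is simply unproved --- indeed, when every point is eventually periodic the integer $n$ with $(x,n,y)\in G_A$ is not determined by the pair $(x,y)$ and the bookkeeping you describe breaks down --- so you should state and prove it as a separate lemma rather than treat it as routine compatibility of \eqref{eq:orbiteq1x} and \eqref{eq:orbiteq2y}.
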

Recall that $r_A, r_B$ denote the Perron-Frobenius eigenvalues of $A, B$, respectively.
\begin{lemma}\label{lem:COE3.2}
Suppose that one-sided topological Markov shifts
$(X_A, \sigma_A)$ and $(X_B,\sigma_B)$
are continuously orbit equivalent via a homeomorphism 
$h:X_A\longrightarrow X_B$.
\begin{enumerate}
\renewcommand{\theenumi}{\roman{enumi}}
\renewcommand{\labelenumi}{\textup{(\theenumi)}}
\item
There exists a $\log r_A$-KMS state for the $\gamma^{B,c_2}$-action on $\OB$,
and similarly 
there exists a $\log r_B$-KMS state for the $\gamma^{A,c_1}$-action on $\OA$.
\item
Put
$\phi_{c_2} = (1 -c_2) \log r_A \in F_\theta(X_B)$
and $\phi_{c_1} = (1 -c_1) \log r_B \in F_\theta(X_A)$. 
Then we have
 $r_{\phi_{c_2}} = r_A$ and  $r_{\phi_{c_1}} = r_B$,
where $r_{\phi_{c_2}}$
and $r_{\phi_{c_1}}$ are  positive eigenvalues of the operators
$\lambda^B_{\phi_{c_2}}$  and $\lambda^A_{\phi_{c_1}}$ satisfying \eqref{eq:RPF} on 
$\DB$ and on $\DA$, respectively.
\end{enumerate}
\end{lemma}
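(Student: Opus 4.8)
The plan is to deduce both assertions from the $C^*$-algebraic conjugacy of the two gauge-type actions furnished by Proposition~\ref{prop:cocycleconjugacy}, combined with the existence and uniqueness of the KMS state for the ordinary gauge action (Enomoto--Fujii--Watatani) and the Ruelle--Perron--Frobenius theorem (Lemma~\ref{lem:RPF}). The point is that a KMS state is transported by an action-intertwining isomorphism, and that the resulting eigenstate identifies the relevant eigenvalue.

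For (i), I would begin with the $\log r_A$-KMS state $\varphi_A$ for the gauge action $\gamma^A$ on $\OA$. By Proposition~\ref{prop:cocycleconjugacy} there is an isomorphism $\Phi:\OA\to\OB$ with $\Phi\circ\gamma^A_t=\gamma^{B,c_2}_t\circ\Phi$. Since $\Phi$ is an isomorphism intertwining the two $\R$-actions, it carries analytic elements of $\gamma^A$ to analytic elements of $\gamma^{B,c_2}$ and commutes with the analytic continuation, so that $\Phi(\gamma^A_{i\log r_A}(x))=\gamma^{B,c_2}_{i\log r_A}(\Phi(x))$. Substituting $x=\Phi^{-1}(x')$, $y=\Phi^{-1}(y')$ into the KMS condition \eqref{eq:KMS} for $\varphi_A$ then shows directly that the pushforward state $\varphi_A\circ\Phi^{-1}$ on $\OB$ is a $\log r_A$-KMS state for the $\gamma^{B,c_2}$-action. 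The second statement of (i) is entirely symmetric, using the intertwining $\Phi^{-1}\circ\gamma^B_t=\gamma^{A,c_1}_t\circ\Phi^{-1}$ and the $\log r_B$-KMS state $\varphi_B$ for $\gamma^B$.

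For (ii), I would feed the KMS state produced in (i) into Proposition~\ref{prop:KMSRuelle}(i). Taking $f=c_2$ and $\beta=r_A$ (here $r_A>1$ since $A$ is irreducible and not a permutation, and $c_2\in C(X_B,\Zp)$ is $\theta$-H\"older), the associated potential is exactly $\phi_{c_2}=(1-c_2)\log r_A$, and the restriction $\varphi$ to $\DB$ of the KMS state from (i) satisfies $\varphi\circ\lambda^B_{\phi_{c_2}}=r_A\varphi$. It then remains only to identify $r_A$ with the Ruelle--Perron--Frobenius eigenvalue $r_{\phi_{c_2}}$ of $\lambda^B_{\phi_{c_2}}$.

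The step I expect to be the crux is this last identification, since Lemma~\ref{lem:RPF} is phrased in terms of the distinguished eigenvalue with its eigenfunction $g_{\phi_{c_2}}$ and eigenstate, while (i) only hands us a state eigenvector carrying the \emph{a priori} unrelated eigenvalue $r_A$. I would resolve it by applying the continuous functional $\varphi$ to the uniform convergence in Lemma~\ref{lem:RPF}(ii): iterating $\varphi\circ\lambda^B_{\phi_{c_2}}=r_A\varphi$ and evaluating at the unit gives $r_A^n/r_{\phi_{c_2}}^n=\varphi\big((\lambda^B_{\phi_{c_2}})^n(1)/r_{\phi_{c_2}}^n\big)\to\varphi_{\phi_{c_2}}(1)\,\varphi(g_{\phi_{c_2}})=\varphi(g_{\phi_{c_2}})$. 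Since $g_{\phi_{c_2}}$ is strictly positive, $\varphi(g_{\phi_{c_2}})$ is a strictly positive finite number, so the geometric sequence $(r_A/r_{\phi_{c_2}})^n$ converges to a positive finite limit, which forces $r_A=r_{\phi_{c_2}}$. The equality $r_{\phi_{c_1}}=r_B$ follows verbatim by the same argument carried out on the $\DA$ side with $f=c_1$ and $\beta=r_B$.
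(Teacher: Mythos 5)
Your proof is correct, and part (i) coincides with the paper's argument: both transport the Enomoto--Fujii--Watatani KMS state $\varphi_A$ through the intertwining isomorphism $\Phi$ of Proposition~\ref{prop:cocycleconjugacy} to obtain a $\log r_A$-KMS state for $\gamma^{B,c_2}$ on $\OB$. For part (ii) you take a genuinely different route at the crux. The paper first invokes the uniqueness assertion of Lemma~\ref{lem:Exellemma} to identify the restriction $\varphi'=\varphi_A\circ\Phi^{-1}|_{\DB}$ with the Ruelle--Perron--Frobenius eigenstate $\varphi_{\phi_{c_2}}$, and then reads off $r_{\phi_{c_2}}=r_A$ from the single computation $r_{\phi_{c_2}}=\varphi_{\phi_{c_2}}(\lambda^B_{\phi_{c_2}}(g_{\phi_{c_2}}))=r_A\varphi_{\phi_{c_2}}(g_{\phi_{c_2}})=r_A$. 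You instead keep $\varphi'$ as it is, iterate $\varphi'\circ(\lambda^B_{\phi_{c_2}})^n=r_A^n\varphi'$, and compare with the uniform convergence of $(\lambda^B_{\phi_{c_2}})^n(1)/r_{\phi_{c_2}}^n$ to $g_{\phi_{c_2}}$ from Lemma~\ref{lem:RPF}(ii); since $g_{\phi_{c_2}}\ge\varepsilon 1$ for some $\varepsilon>0$, the limit of $(r_A/r_{\phi_{c_2}})^n$ is positive and finite, forcing equality. Your version buys independence from Exel's uniqueness lemma (whose application in the paper is the most delicate step, since the RPF eigenstate is a priori an eigenvector for a possibly different eigenvalue), at the cost of using the convergence statement of the Ruelle--Perron--Frobenius theorem rather than only the fixed-point data \eqref{eq:RPF}; an even shorter variant of your idea is to pair $\varphi'$ directly against the eigenfunction, i.e. apply $\varphi'$ to $\lambda^B_{\phi_{c_2}}(g_{\phi_{c_2}})=r_{\phi_{c_2}}g_{\phi_{c_2}}$ and cancel $\varphi'(g_{\phi_{c_2}})>0$.
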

\begin{proof}
(i) By \cite{EFW}, there exists a unique 
$\log r_A$-KMS state written $\varphi_A$ for the gauge action on $\OA$. 
Let $\varPhi:\OA\longrightarrow \OB$ be the isomorphism satisfying \eqref{eq:3.1}.
We then have
\begin{equation*}
(\varphi_A\circ \Phi^{-1}) (\Phi(y)\Phi(\gamma^A_{i\log r_A}(x)))
=
(\varphi_A\circ \Phi^{-1}) (\Phi(x) \Phi(y)),
\qquad
x \in \OaA, \, y \in \OA. 
\end{equation*}
Since
$\Phi(\gamma^A_{i\log r_A}(x)) = \gamma^{B,c_2}_{i\log r_A}(\varPhi(x)), x \in \OaA$,
the state
$\varphi_A\circ \varPhi^{-1}$ is 
a $\log r_A$-KMS state for the $\gamma^{B,c_2}$-action on $\OB$.
We similarly know that  
there exists a $\log r_B$-KMS state for the $\gamma^{A,c_1}$-action on $\OA$.

(ii)
Take a strictly positive function  $g_{\phi_{c_2}}\in \DB$ and 
a faithful state $\varphi_{\phi_{c_2}}$ on $\DB$
satisfying \eqref{eq:RPF} for the Ruelle operator
$\lambda^B_{\phi_{c_2}}$.
By Lemma \ref{lem:Exellemma},  
the state 
$\varphi_A\circ \varPhi^{-1}$ in (i) is 
a unique $\log r_A$-KMS state for the $\gamma^{B,c_2}$-action on $\OB$.
Put
$\varphi' = \varphi_A\circ \varPhi^{-1}|_{\DB}$.
By Proposition \ref{prop:KMSRuelle},
KMS-states and normalized positive eigenvectors of the Ruelle operator bijectively correspond 
so that we have
\begin{equation*}
\varphi' \circ \lambda^B_{\phi_{c_2}} = r_A \varphi'.
\end{equation*}
As in Lemma \ref{lem:Exellemma}, a KMS state is unique, so that 
$\varphi_{\phi_{c_2}} = \varphi'$. 
Hence we have 
\begin{equation*}
\varphi_{\phi_{c_2}} \circ \lambda^B_{\phi_{c_2}} = r_A \varphi_{\phi_{c_2}}.
\end{equation*}
By \eqref{eq:RPF}, we have
\begin{equation*}
r_{\phi_{c_2}} = r_{\phi_{c_2}}\varphi_{\phi_{c_2}}(g_{\phi_{c_2}})
               = \varphi_{\phi_{c_2}}(\lambda^B_{\phi_{c_2}}(g_{\phi_{c_2}}))
               = r_A \varphi_{\phi_{c_2}}(g_{\phi_{c_2}})
               = r_A.
\end{equation*}
\end{proof}
We thus have the following proposition that states an asymptotic relation between
two Perron-Frobenius eigenvalues $r_A$ and $r_B$.
\begin{proposition}\label{prop:limitthm}
Suppose that
one-sided topological Markov shifts
$(X_A, \sigma_A)$ and $(X_B,\sigma_B)$
are continuously orbit equivalent.
Let $\varphi_A$ (resp.  $\varphi_B$)
be the unique KMS state for the gauge action of $\OA$ (resp. $\OB$).
Let $r_A$ (resp. $r_B$) be the Perron-Frobenius eigenvalue of the matrix $A$ 
(resp. $B$).
Then there exist positive constants 
$C_A, C_B$ such that
\begin{equation*}
\lim_{n\to\infty}\varphi_A({r_A^n}{r_B^{-c_1^n}}) = C_A,
\qquad
\lim_{n\to\infty}\varphi_B({r_B^n}{r_A^{-c_2^n}}) = C_B
\end{equation*}
where
\begin{align*}
r_B^{-c_1^n} \in \DA &
\text{ is defined by }
r_B^{-c_1^n}(x) = r_B^{-\sum_{k=0}^{n-1}c_1(\sigma_A^k(x))}
\text{ for } x \in X_A, \\
r_A^{-c_2^n} \in \DB &
\text{ is defined by }
r_A^{-c_2^n}(y) = r_A^{-\sum_{k=0}^{n-1}c_2(\sigma_B^k(y))}
\text{ for } y \in X_B.   
\end{align*} 
\end{proposition}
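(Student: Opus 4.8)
The plan is to reduce the whole statement to the Ruelle--Perron--Frobenius convergence of Lemma~\ref{lem:RPF}(ii), applied to the potential $\phi_{c_1} = (1-c_1)\log r_B$ already isolated in Lemma~\ref{lem:COE3.2}. First I would compute the Birkhoff sum of this potential: since $\phi_{c_1}^n(x) = \log r_B\sum_{k=0}^{n-1}(1 - c_1(\sigma_A^k(x))) = (n - c_1^n(x))\log r_B$, we obtain $e^{\phi_{c_1}^n} = r_B^n\, r_B^{-c_1^n}$. Feeding $a=1$ into the iteration identity \eqref{eq:nruelleop} then gives $(\lambda^A_{\phi_{c_1}})^n(1) = \lambda_A^n(r_B^n\, r_B^{-c_1^n}) = r_B^n\,\lambda_A^n(r_B^{-c_1^n})$.

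Next, by Lemma~\ref{lem:COE3.2}(ii) the eigenvalue of $\lambda^A_{\phi_{c_1}}$ governing \eqref{eq:RPF} is exactly $r_{\phi_{c_1}} = r_B$, so dividing the previous display by $r_B^n$ yields $\lambda_A^n(r_B^{-c_1^n}) = (\lambda^A_{\phi_{c_1}})^n(1)/r_{\phi_{c_1}}^n$. By Lemma~\ref{lem:RPF}(ii) the right-hand side converges uniformly on $X_A$ to $\varphi_{\phi_{c_1}}(1)\,g_{\phi_{c_1}} = g_{\phi_{c_1}}$, the strictly positive eigenfunction. Hence $\lambda_A^n(r_B^{-c_1^n}) \to g_{\phi_{c_1}}$ in the norm of $\DA\cong C(X_A)$.

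The link back to $\varphi_A$ comes from the eigenstate property of its restriction. Applying Proposition~\ref{prop:KMSRuelle}(i) to the gauge action (the case $f\equiv 1$, $\beta = r_A$, whence $\phi\equiv 0$ and $\lambda_\phi = \lambda_A$) shows $\varphi_A|_{\DA}\circ\lambda_A = r_A\,\varphi_A|_{\DA}$, and iterating, $\varphi_A(\lambda_A^n(b)) = r_A^n\,\varphi_A(b)$ for every $b\in\DA$. Taking $b = r_B^{-c_1^n}$ and pulling the scalar $r_A^n$ inside, this reads $\varphi_A(r_A^n\, r_B^{-c_1^n}) = \varphi_A(\lambda_A^n(r_B^{-c_1^n}))$. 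Since $\varphi_A$ is a continuous linear functional and $\lambda_A^n(r_B^{-c_1^n})\to g_{\phi_{c_1}}$ in norm, I conclude $\lim_{n\to\infty}\varphi_A(r_A^n\, r_B^{-c_1^n}) = \varphi_A(g_{\phi_{c_1}}) =: C_A$. Positivity is immediate, as $g_{\phi_{c_1}}$ is strictly positive and $\varphi_A$ is a faithful state, so $C_A > 0$. The statement for $B$ follows verbatim by interchanging the roles of $A,B$ and of $c_1,c_2$, using $r_{\phi_{c_2}} = r_A$ from Lemma~\ref{lem:COE3.2}(ii), with $C_B = \varphi_B(g_{\phi_{c_2}})$.

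I expect no serious obstacle; the one point demanding care is the bookkeeping that converts the scalar factor $r_A^n$ into the $n$-th iterate of $\lambda_A$ through the eigenstate relation, so that the quantity in question becomes \emph{exactly} the normalized Ruelle iterate whose limit Lemma~\ref{lem:RPF}(ii) supplies. A secondary point is to keep the two distinct potentials cleanly separated, namely $\phi\equiv 0$ (governing $\varphi_A$, eigenvalue $r_A$) and $\phi_{c_1}$ (governing the convergence, eigenvalue $r_B$), combining them only via \eqref{eq:nruelleop}.
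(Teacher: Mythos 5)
Your proposal is correct and follows essentially the same route as the paper: combine the iteration identity \eqref{eq:nruelleop} with the Ruelle--Perron--Frobenius convergence of Lemma \ref{lem:RPF}(ii), use $r_{\phi_{c_1}}=r_B$ (resp.\ $r_{\phi_{c_2}}=r_A$) from Lemma \ref{lem:COE3.2}(ii), and transfer the limit through the eigenstate relation $\varphi_A\circ\lambda_A = r_A\varphi_A$, obtaining $C_A=\varphi_A(g_{\phi_{c_1}})>0$ by faithfulness. The only (immaterial) difference is that you first establish norm convergence of $\lambda_A^n(r_B^{-c_1^n})$ and then apply the state, whereas the paper applies the state directly to the RPF limit and unpacks afterwards.
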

\begin{proof}
Put
$\phi_{c_2} = (1 -c_2) \log r_A$.
For the Ruelle operator
$\lambda^B_{\phi_{c_2}}$, 
take a strictly positive function  $g_{\phi_{c_2}}\in \DB$ and 
a faithful state $\varphi_{\phi_{c_2}}$ on $\DB$
satisfying \eqref{eq:RPF}.
By Lemma \ref{lem:RPF} (ii),
we have for $b \in \DB$
\begin{equation*}
\lim_{n\to\infty}
\| \frac{1}{r_{\phi_{c_2}}^n} (\lambda^B_{\phi_{c_2}})^n(b) 
-\varphi_{\phi_{c_2}}(b) g_{\phi_{c_2}} \| =0
\end{equation*}
so that
\begin{equation}
\lim_{n\to\infty}
\frac{1}{r_{\phi_{c_2}}^n} 
 \varphi_B(
(\lambda^B_{\phi_{c_2}})^n(b)) 
=
\varphi_{\phi_{c_2}}(b) \varphi_B(g_{\phi_{c_2}}). \label{eq:limvarB}
\end{equation}
By \eqref{eq:nruelleop},
we see
\begin{align*}
(\lambda^B_{\phi_{c_2}})^n(b) 
& = \lambda_B^n(e^{
{\phi_{c_2}} +{\phi_{c_2}}\circ \sigma_B + \cdots 
             + {\phi_{c_2}}\circ \sigma_B^{n-1}}   b) \\ 
& = \lambda_B^n(
r_A^{(1- {c_2}) + (1- {c_2}\circ \sigma_B) 
+ \cdots + (1- {c_2}\circ \sigma_B^{n-1})}   b) \\ 
& = \lambda_B^n(
{r_A^n}{r_A^{-c_2^n}} b).
\end{align*}
By \eqref{eq:limvarB}
with $r_{\phi_{c_2}} = r_A$ from Lemma \ref{lem:COE3.2},
we have
\begin{equation*}
\lim_{n\to\infty}
\frac{1}{r_A^n} 
 \varphi_B(
\lambda_B^n({r_A^n}{r_A^{-c_2^n}} b) )
=
\varphi_{\phi_{c_2}}(b) \varphi_B(g_{\phi_{c_2}}).
\end{equation*}
Since $\varphi_B\circ \lambda_B^n = r_B^n \varphi_B$,
we thus have
\begin{equation*}
\lim_{n\to\infty}
 \varphi_B(
{r_B^n}{r_A^{-c_2^n}} b) 
=
 \varphi_B(g_{\phi_{c_2}})\varphi_{\phi_{c_2}}(b), 
\qquad b \in \DB. 
\end{equation*}
By letting $b=1$,
we have
\begin{equation*}
\lim_{n\to\infty}
 \varphi_B(
{r_B^n}{r_A^{-c_2^n}}) 
=
 \varphi_B(g_{\phi_{c_2}}). 
\end{equation*}
Put $C_B=\varphi_B(g_{\phi_{c_2}})$.  
Since 
$\varphi_B$ is faithful,
the number $C_B$ is positive.
We thus have a desired equality.
By putting 
$C_A=\varphi_A(g_{\phi_{c_1}})$
for a stictly positive function
$g_{\phi_{c_1}} \in \DA$
with $\phi_{c_1}=(1-c_1)\log r_A$,
we similarly have the other equality
\begin{equation*}
\lim_{n\to\infty}
 \varphi_A(
{r_A^n}{r_B^{-c_1^n}}) 
= C_A.
\end{equation*}
\end{proof}
We now obtain the main result of this paper.
\begin{theorem}\label{thm:entropy}
Let $A$ and $B$ be irreducible, non-permutation matrices with entries in $\{0,1\}$.
Suppose that
one-sided topological Markov shifts
$(X_A, \sigma_A)$ and $(X_B,\sigma_B)$
are continuously orbit equivalent.
Let $\varphi_A$ and  $\varphi_B$
be the unique KMS states for the gauge actions of $\OA$ and of  $\OB$,
respectively.
Let
$r_A$ and  $r_B$ be the Perron--Frobenius eigenvalues of the matrix $A$ 
and of the matrix $B$, respectively.
Denote by  $h_{top}(\sigma_A)$ and  $h_{top}(\sigma_B)$
the topological entropy of  $(X_A,\sigma_A)$
and $(X_B,\sigma_B)$, respectivly.
Then we have
\begin{align*}
h_{top}(\sigma_A) & = -\lim_{n\to\infty} \frac{1}{n}\log \varphi_A(r_B^{-c_1^n}),
\qquad \\
h_{top}(\sigma_B) & = -\lim_{n\to\infty} \frac{1}{n}\log \varphi_B(r_A^{-c_2^n}).
\end{align*}
\end{theorem}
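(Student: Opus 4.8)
The plan is to deduce the theorem directly from Proposition \ref{prop:limitthm} together with Parry's theorem, which (as recalled in the introduction) identifies the topological entropy as $h_{top}(\sigma_A) = \log r_A$ and $h_{top}(\sigma_B) = \log r_B$. Proposition \ref{prop:limitthm} already carries all the analytic weight: it establishes that $\lim_{n\to\infty}\varphi_A(r_A^n r_B^{-c_1^n}) = C_A$ for a strictly positive constant $C_A$, and symmetrically that $\lim_{n\to\infty}\varphi_B(r_B^n r_A^{-c_2^n}) = C_B > 0$. Hence the remaining task is purely to convert this convergence of a product into a statement about the exponential growth rate of $\varphi_A(r_B^{-c_1^n})$, which is an elementary logarithmic manipulation.

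First I would observe that $r_A^n$ is a scalar and therefore pulls out of the state, giving $\varphi_A(r_A^n r_B^{-c_1^n}) = r_A^n\, \varphi_A(r_B^{-c_1^n})$, so that Proposition \ref{prop:limitthm} reads
\begin{equation*}
\lim_{n\to\infty} r_A^n\, \varphi_A(r_B^{-c_1^n}) = C_A.
\end{equation*}
Taking logarithms yields $n\log r_A + \log\varphi_A(r_B^{-c_1^n}) \to \log C_A$, and dividing by $n$ gives
\begin{equation*}
\log r_A + \lim_{n\to\infty} \frac{1}{n}\log\varphi_A(r_B^{-c_1^n}) = 0,
\end{equation*}
where I have used that $\tfrac{1}{n}\log C_A \to 0$. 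Rearranging and invoking $h_{top}(\sigma_A)=\log r_A$ produces the first formula \eqref{eq:th1.1A}; the second formula \eqref{eq:th1.1B} follows identically from the symmetric limit, factoring out $r_B^n$ and using $h_{top}(\sigma_B)=\log r_B$.

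The one point that genuinely needs care, and which is really the crux inherited from the previous proposition, is the strict positivity of $C_A$. Were $C_A$ equal to $0$, the step $\tfrac{1}{n}\log C_A \to 0$ would be illegitimate and the limit might fail to exist; positivity is exactly what guarantees that the logarithmic rate is governed only by the scalar factor $r_A^n$ and not by any decay hidden in $\varphi_A(r_B^{-c_1^n})$. This positivity holds because $C_A = \varphi_A(g_{\phi_{c_1}})$ with $g_{\phi_{c_1}}\in\DA$ strictly positive and $\varphi_A$ faithful, both furnished by the Ruelle--Perron--Frobenius theorem (Lemma \ref{lem:RPF}) and the faithfulness of the unique KMS state (Lemma \ref{lem:Exellemma}). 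Thus the main obstacle does not lie in the final manipulation at all but has already been overcome in Proposition \ref{prop:limitthm}; the theorem is then a clean corollary, and I would present it as such.
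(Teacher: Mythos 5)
Your argument is correct and is exactly the paper's proof: the paper deduces the theorem from Proposition \ref{prop:limitthm} together with $h_{top}(\sigma_A)=\log r_A$, $h_{top}(\sigma_B)=\log r_B$, and your version merely writes out the elementary logarithmic manipulation (and correctly flags that the strict positivity of $C_A$, already secured in the proposition, is what makes it legitimate). No gaps.
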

\begin{proof}
Since $h_{top}(\sigma_A) = \log r_A$ and $h_{top}(\sigma_B) = \log r_B$
(\cite{LM}, \cite{Parry}),
we get the desired equalities by Proposition \ref{prop:limitthm}.
\end{proof}
In \cite{MaJOT2015}, a notion of strongly continuously orbit equivalence between 
one-sided topological Markov shifts
$(X_A, \sigma_A)$ and $(X_B,\sigma_B)$
was introduced.
Suppose that $(X_A, \sigma_A)$ and $(X_B,\sigma_B)$ 
are continuously orbit equivalent via a homeomorphism
$h:X_A\longrightarrow X_B.$
Let $c_1: X_A\longrightarrow \Z$
be the cocycle function for $h:X_A\longrightarrow X_B.$
If there exists a continuous function 
$b_1: X_A\longrightarrow \Z$
such that 
\begin{equation}
c_1(x) = 1 + b_1(x) - b_1(\sigma_A(x)),\quad x \in X_A,
\label{eq:cocycle}
\end{equation}
then 
$(X_A, \sigma_A)$ and $(X_B,\sigma_B)$ are said to be 
strongly continuously orbit equivalent.
Although we have already known that strongly continuous orbit equivalence
of one-sided topological Markov shifts
implies topological conjugacy of their two-sided topological Markov shifts
(\cite[Theorem 5.5]{MaJOT2015}) so that $h_{top}(\sigma_A)=h_{top}(\sigma_B)$, 
we may prove it as an immediate corollary of the above theorem in the following way.
\begin{corollary}\label{cor:SCOE}
Let $A$ and $B$ be irreducible, non-permutation matrices with entries in $\{0,1\}$.
If one-sided topological Markov shifts
$(X_A, \sigma_A)$ and $(X_B,\sigma_B)$
are strongly continuously orbit equivalent, then we have
$h_{top}(\sigma_A)=h_{top}(\sigma_B)$.
\end{corollary}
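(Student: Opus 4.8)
The plan is to derive Corollary~\ref{cor:SCOE} directly from Theorem~\ref{thm:entropy} by showing that under the strong continuous orbit equivalence hypothesis, the limit defining $h_{top}(\sigma_A)$ in \eqref{eq:th1.1A} collapses to $\log r_B$. The cocycle condition \eqref{eq:cocycle} asserts that $c_1 = 1 + b_1 - b_1\circ\sigma_A$ for some $b_1 \in C(X_A,\Z)$, so the starting point is to compute the ergodic sum $c_1^n(x) = \sum_{k=0}^{n-1} c_1(\sigma_A^k(x))$ and observe that it telescopes.

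First I would substitute \eqref{eq:cocycle} into the definition of $c_1^n$. Since $c_1(\sigma_A^k(x)) = 1 + b_1(\sigma_A^k(x)) - b_1(\sigma_A^{k+1}(x))$, summing over $k=0,\dots,n-1$ gives a telescoping sum:
\begin{equation*}
c_1^n(x) = n + b_1(x) - b_1(\sigma_A^n(x)), \qquad x \in X_A.
\end{equation*}
Consequently the function $r_B^{-c_1^n} \in \DA$ satisfies
\begin{equation*}
r_B^{-c_1^n}(x) = r_B^{-n}\, r_B^{-b_1(x)}\, r_B^{\,b_1(\sigma_A^n(x))}, \qquad x \in X_A.
\end{equation*}
The key observation is that because $b_1$ is a continuous integer-valued function on the compact space $X_A$, it is bounded, say $\|b_1\|_\infty = M < \infty$. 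Hence both $r_B^{-b_1(x)}$ and $r_B^{\,b_1(\sigma_A^n(x))}$ lie in the fixed interval $[r_B^{-M}, r_B^{M}]$ uniformly in $x$ and $n$, so their product is bounded above and below by positive constants independent of $n$.

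Next I would feed this into the state $\varphi_A$. Using positivity and normalization of $\varphi_A$ together with the uniform two-sided bound $r_B^{-2M} \le r_B^{-b_1(x)} r_B^{\,b_1(\sigma_A^n(x))} \le r_B^{2M}$, we obtain
\begin{equation*}
r_B^{-2M}\, r_B^{-n} \le \varphi_A(r_B^{-c_1^n}) \le r_B^{2M}\, r_B^{-n}.
\end{equation*}
Taking $-\frac{1}{n}\log$ of all three members and letting $n\to\infty$, the constant factors $r_B^{\pm 2M}$ contribute $\mp\frac{2M}{n}\log r_B \to 0$, so the limit in \eqref{eq:th1.1A} equals $\log r_B$ by the squeeze. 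Theorem~\ref{thm:entropy} then yields $h_{top}(\sigma_A) = \log r_B = h_{top}(\sigma_B)$, where the last equality is $h_{top}(\sigma_B)=\log r_B$. The argument is essentially soft, and the only point requiring care — which I regard as the main (minor) obstacle — is justifying the uniform boundedness of $b_1$: this rests on $X_A$ being compact and $b_1$ continuous and $\Z$-valued, forcing $b_1$ to take finitely many values, after which everything reduces to the elementary squeeze above.
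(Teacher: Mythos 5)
Your proposal is correct and follows essentially the same route as the paper: telescoping the cocycle sum to get $c_1^n(x)=n+b_1(x)-b_1(\sigma_A^n(x))$, noting that the boundary term is bounded, and concluding from Theorem~\ref{thm:entropy}. The paper states the boundedness step without spelling out the squeeze estimate on $\varphi_A(r_B^{-c_1^n})$, which you supply explicitly; otherwise the arguments coincide.
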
 
\begin{proof}
The equality \eqref{eq:cocycle}
implies
\begin{equation*}
c_1^n(x) = n + b_1(x) - b_1(\sigma_A^n(x)),\quad x \in X_A.
\end{equation*}
As the function
$b_1(x) - b_1(\sigma_A^n(x)), x \in X_A$
is bounded,
 the equality 
$h_{top}(\sigma_A) =h_{top}(\sigma_B)
$ 
follows from Theorem \ref{thm:entropy}.
\end{proof}


\section{Example}

Let $A$ and $B$ be the matrices:
\begin{equation*}
A=
\begin{bmatrix}
1 & 1 \\
1 & 1
\end{bmatrix},
\qquad
B=
\begin{bmatrix}
1 & 1 \\
1 & 0 
\end{bmatrix},
\end{equation*}
so that
$r_A = 2, \, r_B = \frac{1 + \sqrt{5}}{2}.$
They are both irreducible and not permutations.
The subshift $X_A$ is the on-sided full shift over $\{1,2\}$,
whereas $X_B$ is the one-sided subshift over $\{1,2\}$ forbidden
the word $(2,2)$.
Let 
$h:X_A \rightarrow X_B$
be a homeomorphism defined by 
substituting the word $(2,1)$ for $2$ in $X_A$
such as
$$
h(1,2,1,2,2,1,1,1,1,2,2,2,1,\dots )
=(1,2,1,1,2,1,2,1,1,1,1,1,2,1,2,1,2,1,1,\dots ).
$$
Then the one-sided topological Markov shifts 
$(X_A,\sigma_A)$ and $(X_B,\sigma_B)$
are continuously orbit equivalent as in \cite{MaPacific}
via the homeomorphism $h:X_A \rightarrow X_B$.
For $x = (x_n)_{n \in \N}, y = (y_n)_{n \in \N}$,
we put
\begin{equation*}
k_1(x) =
\begin{cases} 
0 & \text{ if } x_1 =1, \\
0 & \text{ if } x_1 =2,
\end{cases}
\qquad
l_1(x) =
\begin{cases} 
1 & \text{ if } x_1 =1, \\
2 & \text{ if } x_1 =2,
\end{cases}
\end{equation*}
and
\begin{equation*}
k_2(y) =
\begin{cases} 
0 & \text{ if } y_1 =1, \\
1 & \text{ if } y_1 =2,
\end{cases}
\qquad
l_2(y) =
\begin{cases} 
1 & \text{ if } y_1 =1, \\
1 & \text{ if } y_1 =2.
\end{cases}
\end{equation*}
The functions $k_1, l_1 \in C(X_A,\Zp), k_2,l_2\in C(X_B,\Zp)$
satisfy the equalities 
\eqref{eq:orbiteq1x}, \eqref{eq:orbiteq2y}. 
Hence we have the cocycle functions
\begin{equation*}
c_1(x) 
=
{\begin{cases} 
1 & \text{ if } x_1 =1, \\
2 & \text{ if } x_1 =2,
\end{cases}} \qquad 
c_2(y) 
 =
{\begin{cases} 
1 & \text{ if } y_1 =1, \\
0 & \text{ if } y_1 =2.
\end{cases}} 
\end{equation*}
We will ensure the equalities 
\eqref{eq:th1.1A} and \eqref{eq:th1.1B} in the following concrete way.

{\bf 1. Equality \eqref{eq:th1.1A}}.

As $c_1(\sigma_A^k(x)) = x_{k+1}$ 
for $x = (x_n)_{n \in \N}\in X_A$,
we have 
$$
c_1^n(x) = \sum_{i=0}^{n-1} c_1(\sigma_A^i(x)) = \sum_{i=1}^{n}x_i.
$$
It then follows that
\begin{equation*}
r_B^{-c_1^n(x)} 
= r_B^{-\sum_{i=1}^{n}x_i} 
= \sum_{\mu=(\mu_1,\dots,\mu_n)\in B_n(X_A)} r_B^{-(\mu_1+\dots+\mu_n)} \chi_{U_\mu}(x).
\end{equation*}
Sine the restriction of the KMS state $\varphi_A$ to the subalgebra
$\DA$ is the Bernoulli measure, we see that   
$\varphi_A(\chi_{U_\mu}) = \frac{1}{2^n}$ 
for 
$\mu \in B_n(X_A)$.
Therefore we have
\begin{equation*}
\varphi_A(r_B^{-c_1^n}) 
= \frac{1}{2^n}
\sum_{\mu=(\mu_1,\dots,\mu_n)\in B_n(X_A)} r_B^{-(\mu_1+\dots+\mu_n)}.
\end{equation*}
Now we have
\begin{align*}
\sum_{\mu=(\mu_1,\dots,\mu_n)\in B_n(X_A)} r_B^{-(\mu_1+\dots+\mu_n)}
& = \frac{1}{r_B^n} + \frac{n}{r_B^{n+1}} + \cdots 
 + \frac{\binom{n}{k}}{r_B^{n+k}} 
+ \cdots + \frac{1}{r_B^{2n}} \\
& = \frac{1}{r_B^n} ( 1 + \frac{1}{r_B})^n 
 = 1 
\end{align*}
so that 
we 
have
$\varphi_A(r_B^{-c_1^n}) = \frac{1}{2^n}$
and hence
\begin{equation*}
- \lim_{n\to\infty}\frac{1}{n}\log \varphi_A(r_B^{-c_1^n})
= 
- \lim_{n\to\infty}\frac{1}{n}\log\frac{1}{2^n}
= \log 2
=h_{top}(\sigma_A). 
\end{equation*}


{\bf 2. Equality \eqref{eq:th1.1B}}.

As $c_2(\sigma_B^k(y)) = 2-y_{k+1}$ 
for $y = (y_n)_{n \in \N}\in X_B$,
we have 
$$
c_2^n(y) = \sum_{i=0}^{n-1} c_2(\sigma_B^i(y)) 
= 2n -(y_1+\cdots+y_n).
$$
It then follows that by $r_A =2$
\begin{equation}
r_A^{-c_2^n(y)} 
= r_A^{\sum_{i=1}^{n}y_i -2n} 
= \frac{1}{2^{2n}}
   \sum_{\nu=(\nu_1,\dots,\nu_n)\in B_n(X_B)} 2^{(\nu_1+\dots+\nu_n)} \chi_{U_\nu}(y).
   \label{eq:rAc2}
\end{equation}
Let
$S_1, S_2$ 
be the canonical generating partial isometries of the Cuntz--Krieger algebra
$\OB$ which satisfies
\begin{equation*}
1 = S_1 S_1^* + S_2 S_2^* = S_1^* S_1, \qquad S_2^* S_2 = S_1 S_1^*. 
\end{equation*}
By the identification of the function 
$\chi_{U_\nu}$ with the projection 
$S_\nu S_\nu^*$,
the equality \eqref{eq:rAc2} tells us  that
\begin{equation*}
\varphi_B(r_A^{-c_2^n}) 
= \frac{1}{2^{2n}}
   \sum_{\nu=(\nu_1,\dots,\nu_n)\in B_n(X_B)} 2^{(\nu_1+\dots+\nu_n)} 
\varphi_B(S_\nu S_\nu^*).
\end{equation*}
As the value
$\varphi_B(S_\nu S_\nu^*)$ for $\nu \in B_n(X_B)$
is not equal to $\frac{1}{2^n}$,
we can not use an analogous method to
the computation of $\varphi_A(S_\mu S_\mu^*)$ above.
We then use an operator  
$(S_1^* + \sqrt{2}S_2^*)^n
                  (S_1 + \sqrt{2}S_2)^n$
in the following way.
By the identities
$\varphi_B(S_\nu S_\nu^*) 
= \frac{1}{r_B^n}\varphi_B(S_\nu^* S_\nu)$
for $\nu \in B_n(X_B)$
and 
\begin{equation*}
\sum_{\nu=(\nu_1,\dots,\nu_n)\in B_n(X_B)} 
   2^{(\nu_1+\dots+\nu_n)} 
   \varphi_B(S_\nu^* S_\nu)
=  \varphi_B\left( (\sqrt{2}S_1^* + 2S_2^*)^n
                  (\sqrt{2}S_1 + 2S_2)^n \right), 
\end{equation*}
we  have
\begin{equation*}
   \sum_{\nu=(\nu_1,\dots,\nu_n)\in B_n(X_B)} 
   2^{(\nu_1+\dots+\nu_n)} 
   \varphi_B(S_\nu S_\nu^*)
 =\frac{2^n}{r_B^n} 
      \varphi_B\left( (S_1^* + \sqrt{2}S_2^*)^n
                  (S_1 + \sqrt{2}S_2)^n \right). 
\end{equation*}
Put
$H_n =  (S_1^* + \sqrt{2}S_2^*)^n
                  (S_1 + \sqrt{2}S_2)^n.$
It is easy to see that 
$H_n$ is written as 
\begin{equation*}
H_n = \frac{1}{3}\{  2^{n+1} + (-1)^n\} + \frac{1}{3}\{  2^{n+1} + 2(-1)^{n-1}\}S_1S_1^*.   
\end{equation*}
As $\varphi_B$ is a KMS state on $\OB$, 
the equality 
$
1 = \varphi_B(S_1^*S_1) = r_B \varphi_B(S_1 S_1^*)  
$
holds so that 
$ \varphi_B(S_1 S_1^*) = \frac{1}{r_B}$.
It follows that
\begin{align*}
\varphi_B(r_A^{-c_2^n}) 
& = \frac{1}{2^{2n}}
    \frac{2^n}{r_B^n} 
    \varphi_B\left( (S_1^* + \sqrt{2}S_2^*)^n
                  (S_1 + \sqrt{2}S_2)^n \right) \\ 
& = \frac{1}{2^{n}r_B^n} 
    \varphi_B\left( 
         \frac{1}{3}\{  2^{n+1} + (-1)^n\} 
       + \frac{1}{3}\{  2^{n+1} + 2(-1)^{n-1}\}S_1S_1^*   \right) \\                  
& = \frac{1}{3\cdot r_B^n} 
         \left( 
           2  + \frac{(-1)^n}{2^n} 
          \{2  - \frac{(-1)^n}{2^{n-1}} \} \frac{1}{r_B}      \right).                  
\end{align*}
We thus have
\begin{equation*}
- \lim_{n\to\infty}\frac{1}{n}\log \varphi_B(r_A^{-c_2^n})
= - \lim_{n\to\infty}\frac{1}{n}\log\frac{1}{r_B^n}
= \log r_B 
=h_{top}(\sigma_B).
\end{equation*}

\medskip

{\it Acknowledgments:}
The author would like to deeply thank
the referee for careful reading and lots of helpful advices 
in the presentation of the paper. 
This work was supported by 
JSPS KAKENHI Grant Numbers 15K04896, 19K03537.


\end{document}